\newtheorem{theorem}{\bf{Theorem}}
\newtheorem{assumption}{\bf{Assumption}}
\newtheorem{lemma}{\bf{Lemma}}
\newtheorem{proposition}{Proposition}
\newenvironment{proof}{\noindent{\em Proof:\/}}{\hfill $\Box$\par}
\newtheorem{remark}{\bf{Remark}}
\begin{document}
	
	\title{\textcolor{black}{Distributed Nash Equilibrium Seeking for  Monotone Generalized Noncooperative Games by a Regularized Penalty Method}}
	\author{Chao Sun and Guoqiang Hu\thanks{This work was supported by Singapore Ministry of Education Academic Research Fund Tier 1 RG180/17(2017-T1-002-158). The results in this paper were partially presented at International Conference on Mechatronics and Control Engineering (ICMCE), 2017. C. Sun and G. Hu are with the School of
			Electrical and Electronic Engineering, Nanyang Technological University,
			Singapore 639798 (email: csun002@e.ntu.edu.sg, gqhu@ntu.edu.sg).}}
	\maketitle
	
	\begin{abstract}
		In this work, we study the distributed Nash equilibrium seeking problem for monotone generalized noncooperative games with set constraints and shared affine inequality constraints. A distributed regularized penalty method is proposed. The idea is to use a differentiable penalty function with a time-varying penalty parameter to deal with the inequality constraints. A time-varying regularization term is used to deal with the ill-poseness caused by the monotonicity assumption and the time-varying penalty term. The asymptotic convergence to the least-norm variational equilibrium of the game is proven. Numerical examples show the effectiveness and efficiency of the proposed algorithm.
	\end{abstract}
	
	\begin{IEEEkeywords}
		Multi-agent system; Generalized noncooperative game; Distributed algorithm.
	\end{IEEEkeywords}

	\section{Introduction}

	The noncooperative game has attracted many interests in these years due to its wide applications in modelling of some economic, social and engineering problems. Nash equilibrium describes an equilibrium state that no player can benefit from individually changing its own action. In the noncooperative game studies, the Nash equilibrium seeking problem is one of the most important topic and there have been many methods proposed in the literature to handle this issue,  such as fictitious play, variational inequality, and extremum seeking. With the development of networked systems, there has been an increasing demand for a distributed algorithm design due to the advantages in failure control, resilience, and communication efficiency. Under such circumstances, the distributed Nash equilibrium seeking problem has arisen (\cite{Ye,Xie, de2019distributed,de2019continuous ,grammatico2015decentralized, zeng2019generalized,zhang2019distributed, liang2017distributed,gadjov2018passivity,pavel2019distributed,salehisadaghiani2016distributed,salehisadaghiani2018distributed,yi2019operator, yi2018distributed,tatarenko2018learning,lu2018distributed,franci2019damped,franci2019distributed,yu2017distributed,romano2019dynamic,yi2019asynchronous}), where each player can only interact with its neighboring players in the network.
	
	The monotonicity is an important concept in the studies of Nash equilibrium seeking, and it plays a similar role as that the convexity
	plays in optimization. Monotone games, strictly monotone games and strongly monotone games are three main types of games studied in the literature. Among the above monotonicity concepts, the following relationship holds:  strongly monotone $\rightarrow$ strictly monotone $\rightarrow$ monotone. In the current studies on distributed Nash equilibrium seeking, most of the existing works focus on strongly monotone games. For example, \cite{Ye,zhang2019distributed} proposed continuous-time algorithms and \cite{pavel2019distributed,yi2019operator} designed discrete-time algorithms. There are also some works investigating strictly monotone games, such as the discrete-time algorithm proposed in \cite{salehisadaghiani2016distributed} and the continuous-time algorithms proposed in  \cite{zeng2019generalized,de2019distributed,liang2017distributed,gadjov2018passivity}. In addition, \cite{kannan2012distributed,yin2011nash, grammatico2017proximal, belgioioso2017semi,9029659,yi2018distributed, franci2020distributed} studied monotone games where \cite{grammatico2017proximal, belgioioso2017semi,yi2018distributed, franci2020distributed} adopted operator theory based methods and \cite{9029659,kannan2012distributed, yin2011nash} developed regularization methods. All of these algorithms are discrete-time.
	
	In this work, we propose a novel approach to solve the distributed Nash equilibrium seeking for monotone generalized noncooperative games with set constraints and shared affine inequality constraints by using regularization and penalization. As is well-known, the Nash equilibrium seeking problem can be converted to a Variational Inequality (VI) problem. To solve the monotone VI with constraints, we first design a regularized and penalized VI with time-varying coefficients which has set constraints only. Its solution is proven to asymptotically converge to the original VI provided that the time-varying coefficients satisfy some  conditions. Then, a projected gradient based algorithm is proposed to solve the regularized and penalized VI, where leader-following consensus is used to achieve distributed information exchange.
	
	The contribution of this work can be summarized as follows:

	(a) The centralized regularization methods have been investigated in many works to solve monotone VIs (noncooperative games, equilibrium problems, convex optimization), e.g., the continuous-time algorithms in \cite{cominetti2008strong, boct2020inducing} and the discrete-time algorithms in \cite{facchinei2007finite}\cite{koshal2010single}. However, most of the methods don't consider inequality constraints. In this work, we add a time-varying penalty term into the regularized dynamics to deal with the constraints. According to the authors' knowledge, this is the first work to propose such a dynamical system with both regularized terms and penalized terms and analyze the convergence. The problem transformation from a constrained  monotone VI to a regularized and penalized VI has been preliminarily studied in \cite{konnov2015regularized}, however, \cite{konnov2015regularized} didn't consider its algorithm solution and convergence. Furthermore, the assumptions in our work are indeed independent of the assumptions in \cite{konnov2015regularized} (i.e., C2 and C2'). In addition, in \cite{ attouch2010asymptotic, attouch2011coupling}, continuous-time dynamical systems without regularization were proposed which can be used to solve monotone VI with inequality constraints. However, only ergodic convergence was proven.
	 
	 	 (b) The contribution to distributed Nash equilibrium seeking: to the best of the authors' knowledge, this  is the first work to propose a distributed continuous-time algorithm for monotone (generalized) noncooperative games. In addition, this is the first work to apply the regularized penalty method to solve distributed Nash equilibrium seeking problems. Compared with the operator theory based methods in \cite{grammatico2017proximal, belgioioso2017semi,yi2018distributed, franci2020distributed}, the proposed algorithm is simpler and requires less communication since no multiplier information exchange is needed. Benefiting from the adopted regularization method, the obtained solution is the least-norm solution, which may have potential applications in some practical problems. For example, in a  convexly constrained linear inverse problems \cite{sabharwal1998convexly}, which to some extent can be viewed as a special case of this work, the least-norm solution represents the least possible
	 	 signal energy. Among the regularization based methods,
	 	 \cite{9029659,kannan2012distributed} used pure regularization for set constrained games, \cite{yin2011nash} designed a primal-dual regularization algorithm, while we propose a regularized penalty algorithm, which does not require the evolution of multiplier. In detail, the algorithms in \cite{ yin2011nash, kannan2012distributed}  use full-decision information and \cite{9029659} investigated box constraints only. In addition, our algorithm doesn't require the global Lipschitz assumption and the linear growth assumption. Overall, our algorithm is novel in both continuous-time and discrete-time algorithms.
	 	 
	 	 	\color{black}
	 	 
	 	The remainder of this paper is structured as follows: In Section \ref{S2}, notations and preliminary knowledge are given. In Section  \ref{S3}, the problem formulation is described. In Section \ref{S4}, the regularized penalized VI is proposed and the relationship between its solution with the least-norm variational equilibrium of the original game is studied. Then, a decentralized algorithm using full-decision information and a  distributed algorithm using partial-decision information  are designed, respectively.  Asymptotic convergence to the  least-norm variational equilibrium is proven for both algorithms. In Section \ref{S5}, numerical examples are provided to verify the effectiveness and efficiency of the proposed algorithms.
	 	
				\color{black}

	\section{Notations and Preliminaries\label{S2}}

	\subsection{Notations\label{Notation}}
	
	Throughout this paper, $\mathbb{R}$, $\mathbb{R}^n$ and   $\mathbb{R}^{n\times n}$ denote the real number set, the $n$-dimensional real vector set, and the $n\times n$ dimensional real matrix, respectively. $||\cdot||$ and $|\cdot|$ represent the $2$-norm and the absolute value, respectively. Let $\mathcal{V}=\left\{  1,...,N\right\}$. Then, $[z_i]_{i\in\mathcal{V}}$ denotes a column vector $[z_1^T,\cdots,z_N^T]^T$. For a  vector $z\in\mathbb{R}^N$, $[z]_i$ is the $i$-th element of $z$. For a set $Q$ and a variable $x$, $\mathcal{P}_{Q}[x]=\text{argmin}_{y\in Q}||x-y||^2$ represents the projection of  $x$ onto $Q$. The notation $x\overset{a.e.}=y$ represents $x$ is equal to $y$ at almost everywhere. In this work, $0$ represents an all-zero vector with an appropriate dimension or the real number zero. $\mathbf{1}_N$ represents an $N$-dimensional column vector with all elements being 1. For a vector $g$, $g\leq 0 $ means that each element of $g$ is less than or equal to zero.
		
	\subsection{Preliminaries \label{Graph theory}}
	
	Let $\mathcal{G}=\left\{  \mathcal{V},\mathcal{E}\right\}  $ denote an
	undirected graph, where $\mathcal{V=}\left\{  1,...,N\right\}  $ indicates the
	vertex set and $\mathcal{E\subset V\times V}$ indicates the edge set.
	$\mathcal{N}_{i}=\left\{  j\in\mathcal{V\mid}(j,i)\in\mathcal{E}\right\}  $
	denotes the neighborhood set of vertex $i$.  Path $\mathcal{P}$ between $v_{0}$ and $v_{k}$\ is the
	sequence $\left\{  v_{0},...,v_{k}\right\}  $ where $(v_{i-1},v_{i}%
	)\in\mathcal{E}$ for $i=1,...,k$ and the vertices are distinct. The number $k$
	is defined as the length of path $\mathcal{P}$. Graph $\mathcal{G}$\ is
	connected if for any two vertices, there is a path in $\mathcal{G}$. A matrix
	$A_d=\left[  a_{ij}\right]  \in%
	\mathbb{R}
	^{N\times N}$ denotes the adjacency matrix of $\mathcal{G}$, where $a_{ij}>0$
	if and only if $(j,i)\in\mathcal{E}$ else $a_{ij}=0$. In this paper, we
	suppose $a_{ii}=0.$ A matrix $L\triangleq D-A_d\in%
	\mathbb{R}
	^{N\times N}$ is called the Laplacian matrix of $\mathcal{G}$, where
	$D=\text{diag} { \{d_{ii}\}} \in%
	\mathbb{R}
	^{N\times N}$ is a diagonal matrix with $d_{ii}=\sum\nolimits_{j=1}^{N}a_{ij}
	$ \cite{RenTAC05}.

	 A function $f:%
	\mathbb{R}
	^{n}\rightarrow%
	\mathbb{R}
	$ is locally Lipschitz at $x\in%
	\mathbb{R}
	^{n}$ if there exists a neighborhood $\Omega$ of $x$ and $C \geq 0 $ such that $|f(y)-f(z)|\leq C \left\Vert y-z\right\Vert $ for
	$y,z\in\Omega.$ $f$ is locally Lipschitz on $%
	\mathbb{R}
	^{n}$ if it is locally Lipschitz for all $x\in%
	\mathbb{R}
	^{n}.$ $f$ is globally Lipschitz on $%
	\mathbb{R}
	^{n}$ if for $y,z\in%
	\mathbb{R}
	^{n},$ there exists $C\geq0$ such that $|f(y)-f(z)|\leq C\left\Vert y-z\right\Vert $ \cite{Clark,Jorge}.
	
	A single-valued map $F: \mathbb{R}^n\rightarrow \mathbb{R}^n$ is monotone if for all $x,y\in \mathbb{R}^n$, $(F(x)-F(y))^T(x-y)\geq 0$. It is strictly monotone if the inequality is strict when $x\neq y$. $F$ is strongly monotone if there exists a positive constant $m$ such that $(F(x)-F(y))^T(x-y)\geq m||x-y||^2$ for all $x,y\in \mathbb{R}^n$.
	
	A set $\Omega \subseteq \mathbb{R}^n$ is closed if and only if the limit of every Cauchy sequence (or convergent sequence) contained in $\Omega$ is also an element of $\Omega$.
	
	Let $x_k\in\mathbb{R}^n$, $k=1,\cdots, \infty$ be a sequence of real vectors, $\bar{x}$ is called an accumulation point of sequence $x_k$ if there exists a subsequence $x_{k_s}$ of $x_k$ such that  $\lim_{k_s \to \infty} x_{k_s} = \bar{x}$.   For a bounded sequence $x_k\in\mathbb{R}^n$, it contains at least one accumulation point. Furthermore, the accumulation point is unique if and only if the sequence is convergent.
	
	The following lemma will be used in the subsequent stability analysis.

	\begin{lemma} (Theorem 1.5.5 of \cite{facchinei2007finite}) Let $\Omega$ be a  nonempty, closed and convex subset of $\mathbb{R}^N$. Then, the following conclusions hold: 
		
		 (a) for any $x, y\in\mathbb{R}^N$,
	$(P_\Omega[x]-P_\Omega[y])^T(x-y)\geq||P_\Omega[x]-P_\Omega[y]||^2$ and $||P_\Omega[x]-P_\Omega[y]||\leq ||x-y||$; 
	
	(b) $(y-P_\Omega[x])^T(P_\Omega[x]-x)\geq 0$ for any $y\in\Omega$. \label{projector}
	\end{lemma}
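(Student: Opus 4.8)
The plan is to derive everything from the variational characterization of the projection, so I would prove part (b) first and then obtain part (a) as a consequence. The starting point is the definition $P_\Omega[x]=\operatorname{argmin}_{y\in\Omega}\|x-y\|^2$; since $\Omega$ is nonempty, closed and convex and the map $y\mapsto\|x-y\|^2$ is strictly convex and coercive, this minimizer exists and is unique, so $P_\Omega[x]$ is well defined.

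To prove (b), write $z=P_\Omega[x]$ and fix an arbitrary $y\in\Omega$. By convexity of $\Omega$, the point $z+t(y-z)$ lies in $\Omega$ for every $t\in[0,1]$, and by optimality of $z$ we have $\|x-z-t(y-z)\|^2\geq\|x-z\|^2$. Expanding the left-hand side gives $-2t(x-z)^T(y-z)+t^2\|y-z\|^2\geq 0$; dividing by $t>0$ and letting $t\to 0^+$ yields $(x-z)^T(y-z)\leq 0$, which is exactly $(y-P_\Omega[x])^T(P_\Omega[x]-x)\geq 0$. This establishes statement (b).

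For part (a), I would apply (b) twice. Setting $u=P_\Omega[x]$ and $v=P_\Omega[y]$, inequality (b) for the projection of $x$ with the test point $v\in\Omega$ gives $(v-u)^T(u-x)\geq 0$, and for the projection of $y$ with the test point $u\in\Omega$ gives $(u-v)^T(v-y)\geq 0$. Adding these and rearranging produces $(u-v)^T(x-y)\geq\|u-v\|^2$, which is the first inequality of (a). The second inequality then follows immediately from Cauchy--Schwarz: since $\|u-v\|^2\leq(u-v)^T(x-y)\leq\|u-v\|\,\|x-y\|$, dividing by $\|u-v\|$ when it is nonzero (the case $\|u-v\|=0$ being trivial) gives $\|P_\Omega[x]-P_\Omega[y]\|\leq\|x-y\|$.

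The result is a classical fact of convex analysis and presents no genuine difficulty; the only point requiring care is the limiting argument establishing the variational inequality (b), which relies on convexity of $\Omega$ to keep the segment $z+t(y-z)$ feasible and on the optimality of the minimizer to justify the one-sided derivative. Everything in part (a) is then purely algebraic manipulation of two instances of (b) together with the Cauchy--Schwarz inequality.
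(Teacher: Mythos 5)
Your proof is correct: the variational characterization in (b) is derived properly from the feasibility of the segment $z+t(y-z)$ and the one-sided limit, and part (a) follows cleanly by summing two instances of (b) and applying Cauchy--Schwarz. Note that the paper offers no proof of this lemma at all---it is quoted directly as Theorem 1.5.5 of \cite{facchinei2007finite}---and your argument is precisely the classical one given in that reference, so there is nothing to reconcile.
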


	\section{Problem Formulation\label{S3}}
	
	Consider a set of players $\mathcal{V}\triangleq\{1,\cdots,N\}$ defined in the graph $\mathcal{G}$, where the
	action of player $i$ is denoted as $x_{i}\in%
	\mathbb{R}
	$. Each player $i$ minimizes its own objective function $f_{i}(x):%
	\mathbb{R}
	^{N}\rightarrow%
	\mathbb{R}
	$ where $x=\left[  x_{1},\cdots,x_{N}\right]  ^{T}\in%
	\mathbb{R}
	^{N}$ is the action vector of $N$ players. The results in this work can be easily extended to the case when $x_i\in\mathbb{R}^{m_i}$ with $m_i $ being a positive integer. For the notational simplicity, we take  $m_i=1$.

	 If agent $j$ is not a neighbor of agent $i,$ then player $i$ has
	no access to player $j$'s action directly. Otherwise, player $i$ can get the
	information of player $j$ via a connected undirected graph topology. Our
	objective is to design a distributed Nash equilibrium seeking law for the
	players such that their actions converge to a Nash equilibrium of the game.
	
	Denote $x_{-i}=\left[  x_{1},\cdots,x_{i-1},x_{i+1},\cdots,x_{N}\right]
	^{T}\in%
	\mathbb{R}
	^{N-1}.$  Player $i$' problem can be
	described as%
	\begin{align}
	&\min_{x_{i}\in\Omega_i}\text{ }f_{i}(x_{i},x_{-i}),\notag \\
	&\text{ such that } g(x_i,x_{-i})\leq 0, \label{P1}%
	\end{align}
	where $\Omega_i$ denotes the local strategy space of player $i$, and $g(x_i,x_{-i})=Ax-b\in\mathbb{R}^n$ is a shared inequality constraint function with $n$ being a positive integer,  $A\in\mathbb{R}^{n\times N}$ and $b\in\mathbb{R}^{n}$. 
	
	 In the following development, we simply the notations of $f_{i}(x_{i},x_{-i})$ and $g(x_i,x_{-i})$ to $f_{i}(x)$ and $g(x)$.
	 
	 The following mild assumptions are used in the subsequent analysis.
	 
		\begin{assumption}
		\label{fassumption}	(a)  The objective function $f_{i}(x)$ is twicely continuously differentiable in $x$; 	(b)  $f_{i}(x)$ is convex in $x_i\in\mathbb{R}$ for any $x_{-i}\in\mathbb{R}^{N-1}$.
	\end{assumption}
	
	\begin{assumption}
		\label{monot}
	The map $ F(x)\triangleq \left[ \nabla_{x_{i}} f_{i}(x)\right]_{i\in\mathcal{V}}$ is monotone in $\mathbb{R}^{N}$.  
	\end{assumption}
	
	\begin{assumption}
		\label{gassumption2} (a) $\Omega_i$ is a nonempty, convex and compact set; (b) The set $Q\triangleq\{x\in\Omega | g(x)\leq 0\}$ satisfies the Slater's condition, where $\Omega\triangleq \Omega_1\times\cdots\times\Omega_N $.
	\end{assumption}

	\begin{assumption}
		The communication graph is undirected and connected. \label{graph}
	\end{assumption}

	\section{Distributed algorithm for monotone games} \label{S4}

	\begin{figure}
		\centering
		\hspace*{-2.5cm}\includegraphics[width=1.8\linewidth]{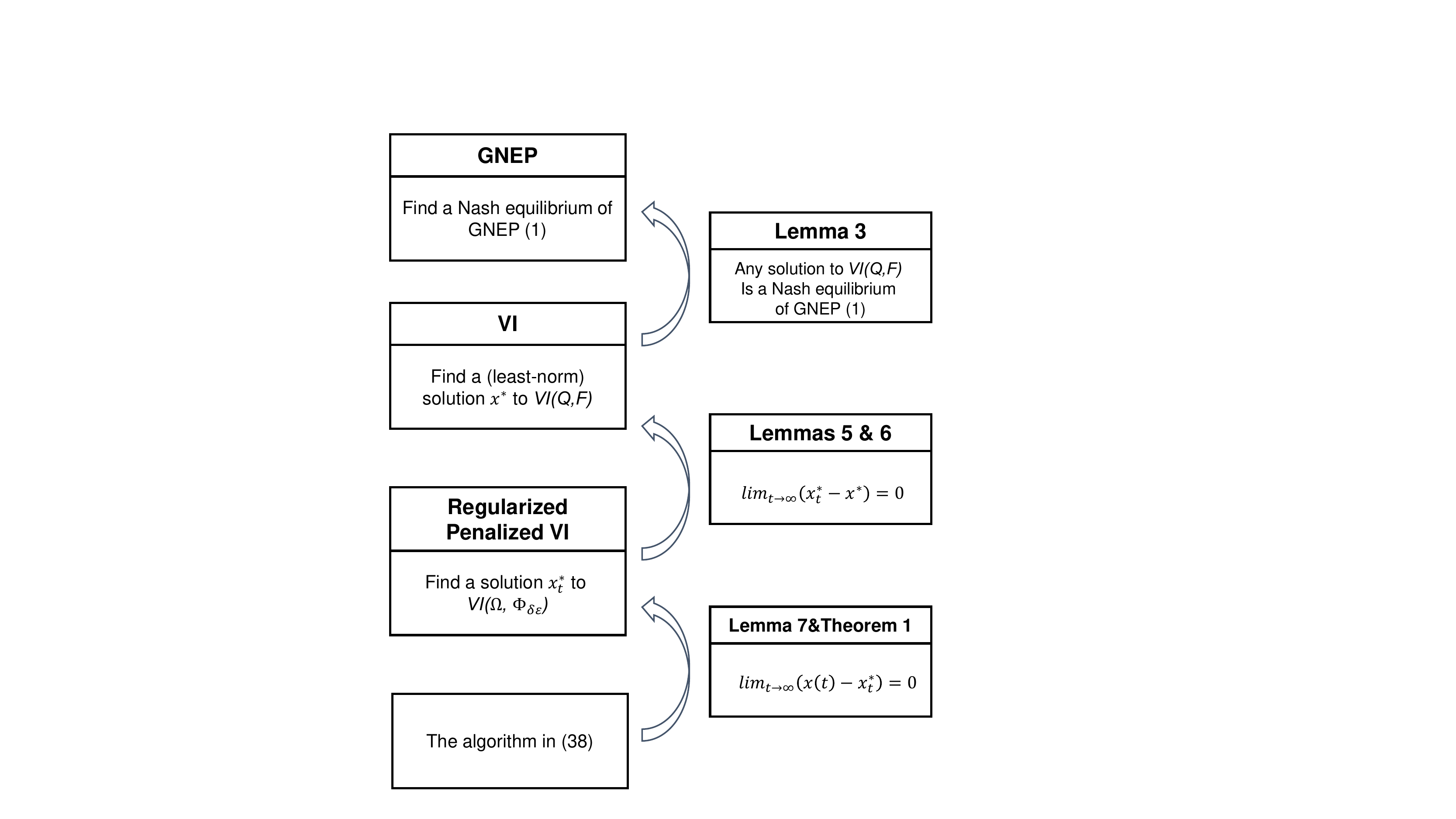}
		\caption{\textcolor{black}{Framework of the convergence analysis for the algorithm in (\ref{dynamics01}).}}
		\label{fig:algorithm2}
	\end{figure}

	\color{black}
   In this work, we develop a distributed regularized penalty method to solve GNEP (\ref{P1}). The proposed method is motivated by the regularized penalty method in \cite{ konnov2015regularized} for the problem transformation of a constrained equilibrium problem (EP) to a regularized and penalized EP, the differential equation method to find a zero point of a regularized maximal monotone operator in \cite{cominetti2008strong}, the projected gradient dynamics in \cite{cavazzuti2002nash} for Nash equilibrium seeking and the consensus based approach to solve a distributed Nash equilibrium seeking problem in \cite{Ye}. We first provide a decentralized algorithm using full-decision information, and then present the distributed algorithm using partial-decision information. The framework of the analysis for a full-decision information algorithm is described in Fig. \ref{fig:algorithm2}. First of all, we analyze the relationship between the solution of a regularized penalized problem with that of the original problem. 

   	\color{black}
		
	\subsection{Variational Inequality and Equilibrium Problems}
	In this section, we introduce several fundamental concepts in variational inequality and equilibrium problems.
	
Let $Q$ be a nonempty, convex and compact set and $F(x): Q\rightarrow \mathbb{R}^N$ be a continuous single-valued mapping, the VI problem, denoted by VI($Q, F$), is to find an $x\in Q$ such that 
\begin{equation}
(y-x)^TF(x)\geq 0, \forall y\in Q. \label{sol}
\end{equation}

 The Dual Variational Inequality problem (DVI), denoted by DVI($Q, F$), is to find an $x\in Q$ such that 
\begin{equation}
(y-x)^TF(y)\geq 0, \forall y\in Q. \label{dvi}
\end{equation}

The EP, denoted by EP($Q, \Gamma$), is defined as follows: to find an $x\in Q$ such that 
\begin{equation}
\Gamma(x,y)\geq 0, \forall y\in Q.
\end{equation}
where $\Gamma(x,y): Q \times Q\rightarrow \mathbb{R}$ is a bifunction with respect to $x$ and $y$.

The  Dual Equilibrium Problem (DEP), denoted by DEP($Q, \Gamma$), is defined as follows:  to find an $x\in Q$ such that 
\begin{equation}
\Gamma(y,x)\leq 0, \forall y\in Q.
\end{equation}

It is obvious that  VI($Q, F$) is equivalent to EP($Q, \Gamma$) with the  bifunction $\Gamma(x,y)=(y-x)^TF(x)$. In addition, DVI($Q, F$) is equivalent to DEP($Q, \Gamma$) where $\Gamma(y,x)=(x-y)^TF(y)$.

Furthermore, since $\Omega$ and the set $\{g(x)\leq 0\}$ are closed and convex under Assumption  \ref{gassumption2}, $Q$ is a closed and convex set. Then, the following conclusion holds:

\begin{lemma} (Lemma 2.1 of \cite{cottle1992pseudo})
		Under Assumptions 	\ref{fassumption}--\ref{gassumption2}, any solution to VI($Q, F$)  is a solution to  DVI($Q, F$) and vice versa. It follows that any solution to  EP($Q, \Gamma$) is a solution to DEP($Q, \Gamma$) and vice versa. \label{r4}
\end{lemma}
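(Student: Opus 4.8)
The plan is to recognize this statement as the classical Minty lemma and to establish the two implications separately: the direction VI($Q,F$) $\Rightarrow$ DVI($Q,F$) will use monotonicity (Assumption \ref{monot}), while the converse will use the continuity of $F$ together with the convexity of $Q$ (Assumption \ref{gassumption2}).

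First I would show that any solution to VI($Q,F$) solves DVI($Q,F$). Let $x\in Q$ satisfy $(y-x)^TF(x)\geq 0$ for all $y\in Q$. By monotonicity, $(F(y)-F(x))^T(y-x)\geq 0$, and therefore
\begin{equation}
(y-x)^TF(y)\geq (y-x)^TF(x)\geq 0,\quad \forall y\in Q,
\end{equation}
so $x$ solves DVI($Q,F$). This direction is immediate and relies only on Assumption \ref{monot}.

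The converse is the main step. Let $x\in Q$ satisfy $(y-x)^TF(y)\geq 0$ for all $y\in Q$, and fix an arbitrary $y\in Q$. For $t\in(0,1]$ set $y_t\triangleq x+t(y-x)$; since $Q$ is convex, $y_t\in Q$. Evaluating the DVI inequality at $y_t$ gives $t\,(y-x)^TF(y_t)\geq 0$, hence $(y-x)^TF(y_t)\geq 0$. Because $f_i$ is twice continuously differentiable (Assumption \ref{fassumption}(a)), the map $F$ is continuous, so letting $t\to 0^+$ yields $y_t\to x$ and $F(y_t)\to F(x)$, giving $(y-x)^TF(x)\geq 0$. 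As $y\in Q$ was arbitrary, $x$ solves VI($Q,F$). I expect this to be the crux: convexity keeps the whole segment $\{y_t\}$ feasible so the DVI inequality can be tested along it, and continuity then transfers the inequality to the limit point $x$.

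Finally, the EP/DEP equivalence follows with no additional work from the identifications stated immediately above the lemma, namely that VI($Q,F$) coincides with EP($Q,\Gamma$) for $\Gamma(x,y)=(y-x)^TF(x)$ and that DVI($Q,F$) coincides with DEP($Q,\Gamma$) for the corresponding bifunction. Hence the equivalence between VI and DVI solutions is precisely the equivalence between EP and DEP solutions. No obstacle beyond the limiting argument in the second direction is anticipated, since the required hypotheses (monotonicity, continuity, and a convex compact $Q$) are all supplied by Assumptions \ref{fassumption}--\ref{gassumption2}.
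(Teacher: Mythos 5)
Your proof is correct: the paper does not prove this lemma itself but simply cites Lemma 2.1 of \cite{cottle1992pseudo}, and your argument is exactly the classical Minty-lemma proof underlying that citation (monotonicity for VI $\Rightarrow$ DVI, and the convexity-plus-continuity limiting argument along the segment $y_t = x + t(y-x)$ for the converse). The EP/DEP equivalence then follows from the bifunction identifications stated in the paper, just as you say.
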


\subsection{Regularized Penalty Method for VI}

It is well-known that VI can be used to solve a Nash equilibrium seeking problem. The following lemma shows the relationship between a Nash equilibrium of GNEP (\ref{P1}) and a solution of VI($Q, F$).

\begin{lemma} (Theorem 5 of \cite{facchinei2007generalized})
	Under Assumptions \ref{fassumption} and \ref{gassumption2}, any solution to  VI($Q, F$) is a Nash equilibrium of GNEP (\ref{P1}), called a variational equilibrium.
\end{lemma}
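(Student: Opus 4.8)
The plan is to take an arbitrary solution $x^*$ of VI($Q,F$) and verify the Nash-equilibrium condition one player at a time. First I would fix a player $i$ and an arbitrary admissible response $z_i$ for that player, i.e. $z_i\in\Omega_i$ with $g(z_i,x^*_{-i})\leq 0$. Forming the single-coordinate perturbation $y=(z_i,x^*_{-i})$, I would check that $y\in Q$: its components lie in the product set $\Omega$ (since $z_i\in\Omega_i$ and $x^*_j\in\Omega_j$ for $j\neq i$), and it satisfies the shared constraint $g(y)\leq 0$ by the choice of $z_i$. Hence $y$ is an admissible test point in the defining inequality \eqref{sol}.

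Next I would substitute this $y$ into $(y-x^*)^TF(x^*)\geq 0$. Because $y$ and $x^*$ differ only in the $i$-th coordinate, the inner product telescopes: every cross term vanishes and the inequality reduces to $\nabla_{x_i}f_i(x^*)\,(z_i-x_i^*)\geq 0$, which now holds for every admissible $z_i$. Then I would bring in the convexity structure. By Assumption~\ref{fassumption}(b) the map $f_i(\cdot,x^*_{-i})$ is convex in $x_i$, and since $g(x)=Ax-b$ is affine while $\Omega_i$ is convex (Assumption~\ref{gassumption2}(a)), player $i$'s feasible set $\{x_i\in\Omega_i\mid g(x_i,x^*_{-i})\leq 0\}$ is convex. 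For a differentiable convex objective over a convex feasible set, the condition $\nabla_{x_i}f_i(x^*)(z_i-x_i^*)\geq 0$ for all admissible $z_i$ is exactly the first-order characterization of optimality; concretely, convexity yields $f_i(z_i,x^*_{-i})\geq f_i(x^*)+\nabla_{x_i}f_i(x^*)(z_i-x_i^*)\geq f_i(x^*)$, so $x_i^*$ is a best response. As $i$ was arbitrary and $x^*\in Q$ is feasible, $x^*$ is a Nash equilibrium of GNEP~\eqref{P1}.

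The delicate point, rather than any heavy computation, is the admissibility step: that the coordinate-wise perturbation $y=(z_i,x^*_{-i})$ stays inside the \emph{shared} constraint set $Q$. This is precisely what couples the single global VI to the players' individually coupled feasible sets, and it is the reason a VI solution yields a \emph{variational} equilibrium. I would emphasize that only this direction holds in general; the converse fails for GNEPs, since an arbitrary generalized Nash equilibrium need not obey the common-multiplier structure that a single VI over $Q$ encodes. Note also that Slater's condition is not needed for this implication (it will matter later for the penalty/KKT analysis), so I would not invoke it here.
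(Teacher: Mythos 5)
Your proof is correct. The paper does not prove this lemma itself — it is quoted as Theorem 5 of \cite{facchinei2007generalized} — and your argument is exactly the standard proof behind that cited result: test the VI at the unilateral perturbation $y=(z_i,x^*_{-i})$, which lies in $Q$ by the admissibility of $z_i$, reduce the inner product to $\nabla_{x_i}f_i(x^*)(z_i-x_i^*)\geq 0$, and conclude via the gradient inequality for the convex function $f_i(\cdot,x^*_{-i})$; your side remarks (Slater's condition is not needed for this direction, and the converse implication fails for general GNEPs) are also accurate.
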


First, we  transform the constrained monotone VI in (\ref{dvi}) to a strongly monotone VI with the set constraint $\Omega$ only, by partial penalization and regularization.

\textbf{Regularized VI: VI(Q, $\mathbf{\Phi_{\delta}}$)}

The regularized map $\Phi_{\delta}(x)$ is defined as
\begin{equation}
\Phi_{\delta}(x)=F(x)+\delta(t)x,
\end{equation}
where $\delta(t)$ is a  time-varying, positive and smooth parameter with $\delta(0)<\infty$, $\delta(t)x$ is a regularization term that is used due to the monotonicity assumption.

\textbf{Regularized Penalized VI: VI($\mathbf{\Omega}$, $\mathbf{\Phi_{\delta\varepsilon}}$)}

To deal with the inequality constraints, we utilize a quadratic penalty function defined by
$P(x)=
\sum_{k=1}^nP_k(x)$ where
\begin{align}
P_k(x)=
\begin{cases}
0, & \text{if  } g_k(x) \leq 0,\\
{(g_k(x))^2},  & \text{if else,} 
\end{cases}  \label{penalty}
\end{align}
and $g_k(x)$ is the $k$-th row of  $g(x)$. $P(x)$ is once continuously differentiable and satisfies
\begin{equation}
P(x)=0 \text{ if } g(x) \leq 0, P(x)>0 \text{ if else}.
\end{equation}

Define a new map $\Phi_{\delta\varepsilon}(x)$ as follows:
\begin{equation}
\Phi_{\delta\varepsilon}(x)=F(x)+\delta(t)x+\varepsilon(t)\nabla_x P(x), \label{phiy}
\end{equation}
where $\varepsilon(t)$ is a time-varying, positive and smooth parameter with $\varepsilon(0)<\infty$.

The regularized penalized VI investigated in this work is defined by   VI($\Omega, \Phi_{\delta\varepsilon}$):
to find an $x\in \Omega$ such that 
\begin{equation}
(y-x)^T\Phi_{\delta\varepsilon}(x)\geq 0, \forall y\in \Omega. \label{sol2}
\end{equation}

We aim to seek a Nash equilibrium of GNEP (\ref{P1}) by solving  VI($\Omega, \Phi_{\delta\varepsilon}$). To this end, we analyze the relationship between the solutions to VI($Q, F$) and  VI($\Omega, \Phi_{\delta\varepsilon}$) over time.

\begin{remark} Most of the existing methods to solve VI($Q, F$) requires at least strict monotonicity of $F$. Methods to deal with a monotone $F$ include the extragradient method, the  hyperplane projection method, the regularization method,  the proximal point method, and the splitting method \cite{facchinei2007finite}, etc. The proposed regularized penalty method is motivated by the regularization method. The difference with a general regularization method is that a time-varying penalty function is used to deal with inequality constraints. In addition, we prefer   a differential equation solution different from the discrete-time solutions in most of the existing regularization methods.

Note that when the inequality constraint is not considered (i.e., there is no penalty term $\varepsilon(t)\nabla_x P(x)$), the relationship between the regularized VI and the original problem,  and the corresponding algorithms  were investigated in  \cite{facchinei2007finite}, etc. However, when the penalty term is involved, the existing analysis and conclusions do not hold (e.g., Theorem 12.2.3 of  \cite{facchinei2007finite}). Thus, here we generalize some key conclusions from the unpenalized case to the penalized case.

\end{remark} 

\color{black}
Define the following bifunction
\begin{align}
\Gamma_{\delta\varepsilon}(x,y)=(y-x)^TF(x)+\delta(t)(||y||^2-||x||^2)\notag\\
+\varepsilon(t)
(P(y)-P(x)).
\end{align}

According to Proposition 2.2  of \cite{gwinner1981penalty}, the following conclusion holds:

\begin{lemma} 
		Under Assumptions \ref{fassumption}--\ref{gassumption2}, at any time $t$, a vector $x$ belongs to SOL($\Omega, \Phi_{\delta\varepsilon}$) if and only if it solves EP($\Omega, \Gamma_{\delta\varepsilon}$).
\end{lemma}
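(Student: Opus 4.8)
The plan is to recognize that, at a frozen time $t$, the last two terms of $\Gamma_{\delta\varepsilon}$ form the increment of a single convex, continuously differentiable potential, so that the equivalence reduces to the classical correspondence between an equilibrium problem driven by a convex separable potential and its associated variational inequality. Concretely, I would introduce $\theta(x)\triangleq\delta(t)\|x\|^2+\varepsilon(t)P(x)$ and write the bifunction in the split form $\Gamma_{\delta\varepsilon}(x,y)=(y-x)^TF(x)+\theta(y)-\theta(x)$. The VI operator in \eqref{phiy} is then $F$ plus the gradient of the regularization-and-penalty potential $\theta$ (up to the normalization of its quadratic part), i.e. $\Phi_{\delta\varepsilon}(x)=F(x)+\nabla\theta(x)$. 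The only preliminary fact I need is that $\theta$ is convex and $C^1$: the scalar map $s\mapsto(\max\{0,s\})^2$ is convex and differentiable and each $g_k$ is affine, so every $P_k$, and hence $P=\sum_{k}P_k$, is convex; \eqref{penalty} is already noted to be once continuously differentiable, and the quadratic term is smooth and convex.

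For the direction SOL$(\Omega,\Phi_{\delta\varepsilon})\subseteq{}$EP$(\Omega,\Gamma_{\delta\varepsilon})$ I would invoke the gradient inequality for the convex function $\theta$: for every $y\in\Omega$, $\theta(y)-\theta(x)\geq\nabla\theta(x)^T(y-x)$. Substituting this into the split form yields $\Gamma_{\delta\varepsilon}(x,y)\geq(F(x)+\nabla\theta(x))^T(y-x)=\Phi_{\delta\varepsilon}(x)^T(y-x)$, and since $x$ solves \eqref{sol2} the right-hand side is nonnegative for all $y\in\Omega$; hence $\Gamma_{\delta\varepsilon}(x,y)\geq0$ and $x$ solves the EP.

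For the converse EP$\Rightarrow$VI I would use the convexity of $\Omega$ (Assumption \ref{gassumption2}) together with a directional-derivative limit. Fix $z\in\Omega$ and put $y_s=x+s(z-x)\in\Omega$ for $s\in(0,1]$. The EP inequality gives $\Gamma_{\delta\varepsilon}(x,y_s)\geq0$; dividing by $s$ and letting $s\to0^+$, the first term tends to $(z-x)^TF(x)$, while $\tfrac1s(\theta(y_s)-\theta(x))\to\nabla\theta(x)^T(z-x)$ by differentiability of $\theta$. Therefore $\Phi_{\delta\varepsilon}(x)^T(z-x)\geq0$ for every $z\in\Omega$, which is exactly $x\in{}$SOL$(\Omega,\Phi_{\delta\varepsilon})$. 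This is the content of Proposition 2.2 of \cite{gwinner1981penalty} specialized to the present potential.

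The argument is largely routine; the only points needing genuine care are establishing the convexity and $C^1$ regularity of the composite penalty $P$, which rest on the affine form of $g$ and the squared-positive-part structure in \eqref{penalty}, and the book-keeping check that the gradient (equivalently, the directional derivative) of $\theta$ reproduces precisely the term $\delta(t)x+\varepsilon(t)\nabla_xP(x)$ of $\Phi_{\delta\varepsilon}$ in \eqref{phiy}. I would also emphasize that this lemma does not use monotonicity of $F$ at all—Assumption \ref{monot} enters only in the later convergence analysis—so the equivalence holds for the general continuous $F$ of the game.
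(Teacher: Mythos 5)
Your strategy is the right one, and it is precisely the argument the paper compresses into its citation of Proposition 2.2 of \cite{gwinner1981penalty} (the paper gives no proof of its own): split $\Gamma_{\delta\varepsilon}$ as $(y-x)^TF(x)$ plus the increment of a convex $C^1$ potential, then obtain one implication from the gradient inequality and the converse from a directional-derivative limit using convexity of $\Omega$. Your verification that $P$ is convex and $C^1$ (each $g_k$ affine, composed with the squared positive part) and your remark that Assumption \ref{monot} plays no role in this lemma are both correct.

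However, there is a genuine flaw at the pivotal step: with your $\theta(x)=\delta(t)||x||^2+\varepsilon(t)P(x)$, the identity $\Phi_{\delta\varepsilon}=F+\nabla\theta$ is false, because $\nabla(\delta(t)||x||^2)=2\delta(t)x$ while (\ref{phiy}) carries $\delta(t)x$. The parenthetical ``up to the normalization of its quadratic part'' cannot be waved away: both of your implications rest on that identity, and with the paper's literal definitions the two problems are not equivalent. What your argument actually proves is that EP($\Omega,\Gamma_{\delta\varepsilon}$) coincides with VI($\Omega,\,F+2\delta(t)x+\varepsilon(t)\nabla_xP$); since both VIs are strongly monotone and hence uniquely solvable, the two solution sets generically differ. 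Concretely, take $N=1$, $f_1(x)=-x_1$ (so $F\equiv -1$), $\Omega=[0,1]$, an inactive shared constraint such as $x_1-2\leq 0$ (so $P\equiv 0$ and $\nabla_xP\equiv 0$ on $\Omega$, and Slater holds), and $\delta(t)=1$: the solution of VI($\Omega,\Phi_{\delta\varepsilon}$) in the sense of (\ref{sol2}) is $x=1$, whereas the unique solution of EP($\Omega,\Gamma_{\delta\varepsilon}$) is $x=1/2$. The root cause is a factor-of-two inconsistency in the paper's own pair of definitions; the lemma is true, and your proof goes through verbatim, once the potential is taken as $\theta(x)=\frac{\delta(t)}{2}||x||^2+\varepsilon(t)P(x)$ and the quadratic term of $\Gamma_{\delta\varepsilon}$ is correspondingly read as $\frac{\delta(t)}{2}(||y||^2-||x||^2)$ (equivalently, $\Phi_{\delta\varepsilon}$ is read with $2\delta(t)x$). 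A complete proof must surface and resolve this normalization explicitly; your concluding ``book-keeping check'' that $\nabla\theta$ reproduces $\delta(t)x+\varepsilon(t)\nabla_xP(x)$ is exactly the check that fails as written.
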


Furthermore, we can get the following conclusion:

\begin{lemma}
	 Let $\delta_k$ and $\varepsilon_k$ be two arbitrary and positive sequences, and $x_{\delta\varepsilon}^k$ be a solution to EP($\Omega, \Gamma_{\delta_k\varepsilon_k}$). Then, under Assumptions \ref{fassumption}--\ref{gassumption2}, each sequence $x_{\delta\varepsilon}^k$ with $\delta_k\rightarrow 0$ and $\varepsilon_k\rightarrow \infty$ has at least one accumulation point, and all these accumulation  points are solutions to SOL($Q, F$). \label{l7} 
\end{lemma}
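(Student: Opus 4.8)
The plan is to split the argument into three parts: existence of an accumulation point, feasibility of every accumulation point, and optimality of every accumulation point. Write $x^k := x_{\delta\varepsilon}^k$ for brevity. Since $x^k \in \Omega$ and $\Omega$ is compact by Assumption \ref{gassumption2}(a), the sequence $\{x^k\}$ is bounded and, by the accumulation-point property recalled in Section \ref{S2}, possesses at least one accumulation point. Fix such a point $\bar x$ together with a subsequence $x^{k_s} \to \bar x$; along it $\delta_{k_s} \to 0$ and $\varepsilon_{k_s} \to \infty$. Throughout I would exploit that $F$ is continuous by Assumption \ref{fassumption}(a), hence bounded on the compact set $\Omega$, and that $P$ is continuous with $P \geq 0$ and $P(x)=0$ exactly when $g(x)\le 0$.

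For feasibility, I would start from the fact that $x^k$ solves EP($\Omega, \Gamma_{\delta_k\varepsilon_k}$), i.e. $(y-x^k)^TF(x^k)+\delta_k(\|y\|^2-\|x^k\|^2)+\varepsilon_k(P(y)-P(x^k))\ge 0$ for every $y \in \Omega$. By Assumption \ref{gassumption2}(b) the set $Q$ is nonempty, so I pick any $y^* \in Q$; then $P(y^*)=0$ and the inequality rearranges to $\varepsilon_k P(x^k) \le (y^*-x^k)^T F(x^k) + \delta_k(\|y^*\|^2-\|x^k\|^2)$. The right-hand side is uniformly bounded by some constant $M$, because $F$ and $\|\cdot\|^2$ are bounded on $\Omega$ and $\{\delta_k\}$ is bounded. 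Hence $0 \le P(x^k) \le M/\varepsilon_k \to 0$, and continuity of $P$ forces $P(\bar x)=0$, i.e. $g(\bar x)\le 0$. Since $\Omega$ is closed, $\bar x \in \Omega$ as well, so $\bar x \in Q$.

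For optimality, I would return to the EP inequality and now test it against an arbitrary $y \in Q$. Using $P(y)=0$ and discarding the nonnegative term $\varepsilon_k P(x^k)$ yields $(y-x^k)^T F(x^k) + \delta_k(\|y\|^2-\|x^k\|^2) \ge \varepsilon_k P(x^k) \ge 0$. Passing to the limit along the subsequence, continuity of $F$ sends the first term to $(y-\bar x)^T F(\bar x)$, while $\delta_{k_s}\to 0$ annihilates the second; therefore $(y-\bar x)^T F(\bar x)\ge 0$ for all $y \in Q$, which is exactly VI($Q,F$). Together with $\bar x \in Q$ this gives $\bar x \in$ SOL($Q,F$). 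Alternatively, one may take the Minty route: invoke monotonicity of $\Phi_{\delta\varepsilon}$ to rewrite the condition as $(y-x^k)^T F(y)+\delta_k(\|y\|^2 - \ldots)\ge 0$ using $\nabla_x P(y)=0$ on $Q$, take limits to obtain DVI($Q,F$), and apply Lemma \ref{r4} to return to VI($Q,F$); this variant makes explicit use of Assumption \ref{monot}.

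I expect the feasibility step to be the crux. The entire purpose of driving $\varepsilon_k\to\infty$ is to trade a bounded residual $\varepsilon_k P(x^k)$ against a vanishing constraint violation $P(x^k)$, and the only nonroutine ingredient is the uniform bound $M$, which rests on compactness of $\Omega$ and continuity of $F$ rather than on any growth or global Lipschitz hypothesis. The optimality step is then essentially a continuity-and-limit argument; the one point demanding care is to drop the penalty term in its correct (nonnegative) direction \emph{before} taking limits, since $\varepsilon_k P(x^k)$ is only known to lie in $[0,M]$ and need not itself converge to zero.
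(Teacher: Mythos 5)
Your proposal is correct and follows essentially the same route as the paper's proof: compactness of $\Omega$ gives the accumulation point, the EP inequality tested at a feasible point plus a squeeze argument gives $P(\bar x)=0$ hence $\bar x \in Q$, and a limit of the EP inequality over $y \in Q$ gives the VI condition. The only difference is a minor streamlining: where the paper first establishes $\lim_{k_s\rightarrow\infty}\varepsilon_{k_s}P(x_{\delta\varepsilon}^{k_s})=0$ (by testing at $y=\bar x$) before passing to the limit, you simply discard the nonnegative term $\varepsilon_k P(x^k)$ before taking limits, which is equally valid.
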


\begin{proof}
	 According to the strong monotonicity of $\Phi_{\delta\varepsilon}$, $x_{\delta\varepsilon}^k$ exists and is unique for each $k$ (Theorem 2.3.3 of \cite{facchinei2007finite}). Since $\Omega$ is bounded, the sequence $x_{\delta\varepsilon}^k$ is bounded and thus has at least one accumulation point. Let $\bar{x}$ be an arbitrary accumulation point and $x_{\delta\varepsilon}^{k_s}$ is a subsequence of $x_{\delta\varepsilon}^k$ that converges to $\bar{x}$, i.e., $\lim_{k_s\rightarrow\infty}x_{\delta\varepsilon}^{k_s}=\bar{x}$. Since we consider a finite-dimensional space $\mathbb{R}^N$,   weak convergence and strong convergence are equivalent. According to the closeness of $\Omega$,  $\bar{x}\in\Omega$ (Section \ref{S2}).
	 
	  According to the definition of EP, for any $y\in\Omega$,
	 \begin{align} 
	 0\leq P(x_{\delta\varepsilon}^{k_s})\leq \varepsilon_{k_s}^{-1} (y-x_{\delta\varepsilon}^{k_s})^TF(x_{\delta\varepsilon}^{k_s})+P(y)\notag\\
	 +\varepsilon_{k_s}^{-1}\delta_{k_s}(||y||^2-||x_{\delta\varepsilon}^{k_s}||^2). \label{r1}
	 \end{align} 
	 
	 According to the continuity of $P$, $P(\bar{x})= \lim_{k_s\rightarrow\infty}P(x_{\delta\varepsilon}^{k_s})$. Furthermore,  for any $y\in Q$,
	 \begin{equation} 
	 \lim_{k_s\rightarrow\infty}( \varepsilon_{k_s}^{-1} (y-x_{\delta\varepsilon}^{k_s})^TF(x_{\delta\varepsilon}^{k_s})+\varepsilon_{k_s}^{-1}\delta_{k_s}(||y||^2-||x_{\delta\varepsilon}^{k_s}||^2))=0. \notag
	 \end{equation} 
	 
	According to the Squeeze theorem, $P(\bar{x})=0$ and $\bar{x}\in Q.$
	
	In addition, since
	 \begin{align} 
0\leq \varepsilon_{k_s}P(x_{\delta\varepsilon}^{k_s}) \leq(\bar{x}-x_{\delta\varepsilon}^{k_s})^TF(x_{\delta\varepsilon}^{k_s})\notag\\
+\delta_{k_s}(||\bar{x}||^2-||x_{\delta\varepsilon}^{k_s}||^2), \label{x}
	\end{align} 
	and 
	\begin{equation}
\lim_{k_s\rightarrow\infty}( (\bar{x}-x_{\delta\varepsilon}^{k_s})^TF(x_{\delta\varepsilon}^{k_s})
+\delta_{k_s}(||\bar{x}||^2-||x_{\delta\varepsilon}^{k_s}||^2))=0,
	\end{equation}
	we have $\lim_{k_s\rightarrow\infty}\varepsilon_{k_s}P(x_{\delta\varepsilon}^{k_s})=0$ based on the Squeeze theorem, 
	which implies that for any $y\in Q$,
	 \begin{eqnarray}
& &	(y-\bar{x})^TF(\bar{x})=\lim_{k_s\rightarrow\infty}(y-x_{\delta\varepsilon}^{k_s})^TF(x_{\delta\varepsilon}^{k_s})\notag\\
& &	\geq \lim_{k_s\rightarrow\infty}(\varepsilon_{k_s}P(x_{\delta\varepsilon}^{k_s})-\delta_{k_s}(||y||^2-||x_{\delta\varepsilon}^{k_s}||^2))=0.\notag 
 \end{eqnarray} 
 
 Thus, $\bar{x}$ is a solution to SOL($Q, F$).
\end{proof}
	


%
%
%
%
%
%

\begin{remark}	
Lemma \ref{l7} is a special case of Theorem 4.1 in \cite{konnov2015regularized} for a bounded constraint set $\Omega$ and a monotone mapping. For the completeness, we adapt the proof so that it is compatible with the studied problem in this work.
\end{remark}

In the following, for notational convenience, we denote  the solution to VI($\Omega, \Phi_{\delta\varepsilon}$) at time $t$ by $x_t^*$. The relationship between $x_t^*$ and the least-norm solution to VI($Q, F$) can be described as follows.

\color{black}
\begin{lemma} 
		Under Assumptions \ref{fassumption}--\ref{gassumption2}, if $\lim_{t\rightarrow\infty} \delta^2(t)\varepsilon(t)$ $=\infty$,  $\lim_{t\rightarrow\infty} \delta(t)=0$, and $\lim_{t\rightarrow\infty} \varepsilon(t)=\infty$, then $\lim_{t\rightarrow\infty}x_t^*=x^*$, where $x^*$ is the least-norm solution to VI($Q, F$). \label{relationship}
\end{lemma}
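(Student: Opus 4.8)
The plan is to prove convergence through the accumulation-point strategy already prepared by Lemma \ref{l7}. Since $\Omega$ is compact, the family $x_t^*$ is bounded and hence has accumulation points, and by Lemma \ref{l7} each accumulation point lies in $\mathrm{SOL}(Q,F)$. Because $F$ is monotone and $Q$ is closed and convex, the equivalence in Lemma \ref{r4} shows $\mathrm{SOL}(Q,F)$ to be a closed convex set (it equals $\{x\in Q:(y-x)^TF(y)\ge 0\ \forall y\in Q\}$, an intersection of half-spaces in $x$); it is nonempty by the standard existence theorem on a compact convex set. Hence its minimal-norm element $x^*$ is unique, and it suffices to show that every accumulation point $\bar x$ satisfies $\|\bar x\|\le\|x^*\|$: this forces $\bar x=x^*$, and a bounded family with a single accumulation point converges.

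To bound the norm I would test the defining inequality of VI($\Omega,\Phi_{\delta\varepsilon}$) at $y=x^*$ (legitimate since $x^*\in Q\subseteq\Omega$), giving $(x^*-x_t^*)^TF(x_t^*)+\delta(t)(x^*-x_t^*)^Tx_t^*+\varepsilon(t)(x^*-x_t^*)^T\nabla P(x_t^*)\ge 0$. Two facts drive the estimate: monotonicity of $F$ gives $(x^*-x_t^*)^TF(x_t^*)\le(x^*-x_t^*)^TF(x^*)$, and convexity of $P$ with $P(x^*)=0$ gives $(x^*-x_t^*)^T\nabla P(x_t^*)\le -P(x_t^*)\le 0$. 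Substituting the first and discarding the nonpositive penalty term yields $(x^*-x_t^*)^TF(x^*)+\delta(t)(x^*-x_t^*)^Tx_t^*\ge 0$, i.e. $\delta(t)\bigl(\|x_t^*\|^2-(x^*)^Tx_t^*\bigr)\le -(x_t^*-x^*)^TF(x^*)$. In the classical Tikhonov argument the right-hand side is $\le 0$ because the regularized point is feasible; here it need not be, and controlling it is the crux of the proof.

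The main obstacle is thus the infeasibility of $x_t^*$, which lies in $\Omega$ but not necessarily in $Q$; I would resolve it in two steps. First, feeding the convexity bound $(x^*-x_t^*)^T\nabla P(x_t^*)\le -P(x_t^*)$ back into the tested inequality gives $\varepsilon(t)P(x_t^*)\le (x^*-x_t^*)^TF(x^*)+\delta(t)(x^*-x_t^*)^Tx_t^*$, whose right-hand side is bounded by a constant $M$ uniformly in $t$ (as $x_t^*$ ranges over the compact set $\Omega$ and $\delta$ is bounded); hence $P(x_t^*)\le M/\varepsilon(t)$, so $\sqrt{P(x_t^*)}=\|(g(x_t^*))_+\|\le \sqrt{M/\varepsilon(t)}$. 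Second, I would convert this violation into a bound on the problematic term: invoking the KKT multipliers $\lambda^*\ge 0$ of VI($Q,F$), which exist under the Slater condition of Assumption \ref{gassumption2}, together with complementary slackness and $g=Ax-b$, one obtains $(x_t^*-x^*)^TF(x^*)\ge -\|\lambda^*\|\,\|(g(x_t^*))_+\|\ge -\|\lambda^*\|\sqrt{M/\varepsilon(t)}$; an affine (Hoffman-type) error bound $\mathrm{dist}(x_t^*,Q)\le c\,\|(g(x_t^*))_+\|$ would serve equally well.

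Combining the two steps gives $\|x_t^*\|^2-(x^*)^Tx_t^*\le \|\lambda^*\|\sqrt{M}\,/\bigl(\delta(t)\sqrt{\varepsilon(t)}\bigr)$, and this is exactly where the hypotheses enter: since $\delta(t)\to 0$, the assumption $\delta^2(t)\varepsilon(t)\to\infty$ is equivalent to $\delta(t)\sqrt{\varepsilon(t)}\to\infty$, so the right-hand side tends to $0$ and $\limsup_{t}\bigl(\|x_t^*\|^2-(x^*)^Tx_t^*\bigr)\le 0$. Passing to an accumulation point $\bar x$ along $t_s\to\infty$ and using continuity, $\|\bar x\|^2\le (x^*)^T\bar x\le \|x^*\|\,\|\bar x\|$ by Cauchy--Schwarz, hence $\|\bar x\|\le\|x^*\|$. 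As argued in the first paragraph this yields $\bar x=x^*$, and therefore $\lim_{t\to\infty}x_t^*=x^*$. The only delicate point beyond these estimates is justifying the error-bound/KKT inequality in the stated generality; the remainder is monotonicity, convexity, and compactness.
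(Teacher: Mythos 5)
Your proof is correct, but the core estimate is obtained by a genuinely different mechanism than the paper's. Both arguments share the same skeleton: accumulation points exist by compactness, each lies in SOL($Q,F$) by Lemma \ref{l7}, the penalty decay $P(x_t^*)\le C/\varepsilon(t)$ is obtained by testing the regularized VI at a feasible point, and the conclusion follows from uniqueness of the least-norm element (your Minty-based justification of that uniqueness is a nice self-contained substitute for the paper's citation of \cite{facchinei2007finite}). Where you diverge is in handling the infeasibility of $x_t^*$. The paper converts the penalty decay into a distance bound $\|x_t^*-\mathcal{P}_Q[x_t^*]\|\le\sqrt{C_2/\varepsilon(t)}$ through an explicit convex-combination construction with the Slater point (in effect proving the Hoffman-type error bound you mention as an alternative), then projects onto $Q$, invokes the dual VI (\ref{dvi}) at the projected point, and uses local Lipschitz continuity and boundedness of $F$ to reach $\|x_t^*\|^2-\|x^*\|^2\le \frac{C_3}{\delta(t)}\sqrt{\frac{C_2}{\varepsilon(t)}}$. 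You instead combine the tested inequality with convexity of $P$ and monotonicity of $F$, and control the problematic term $(x_t^*-x^*)^TF(x^*)$ via KKT multipliers and complementary slackness, exploiting the affine structure $g=Ax-b$; Cauchy--Schwarz at the accumulation point then closes the argument. Your route is shorter and needs neither the projection construction nor any Lipschitz property of $F$, but it imports the existence of multipliers for VI($Q,F$) under Slater's condition --- a standard but nontrivial result, and precisely the step you flag yourself; the paper's route is longer but self-contained, deriving its own error bound directly from the Slater point. Both estimates are of the same order $1/(\delta(t)\sqrt{\varepsilon(t)})$, and both use the hypothesis $\delta^2(t)\varepsilon(t)\to\infty$ in exactly the same way.
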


\begin{proof} 
	According to Page 1128 of \cite{facchinei2007finite}, VI($Q, F$) has a unique least-norm solution.

According to the definition, for an arbitrary sequence $x_{\delta\varepsilon}^k$, we have for any $y\in\Omega$,
\begin{align} 
0\leq P(x_{\delta\varepsilon}^{k})\leq \varepsilon_{k}^{-1} (y-x_{\delta\varepsilon}^{k})^TF(x_{\delta\varepsilon}^{k})\notag\\
+P(y)+\varepsilon_{k}^{-1}\delta_{k}(||y||^2-||x_{\delta\varepsilon}^{k}||^2). \label{r2}
\end{align} 

Since $\delta_{k}(||y||^2-||x_{\delta\varepsilon}^{k}||^2)$ is uniformly bounded, there exists a positive constant $C_1$ such that $P(x_{\delta\varepsilon}^{k})\leq \frac{C_1}{\varepsilon_{k}}$ by selecting $y\in Q$. According to (\ref{penalty}), 
$[Ax_{\delta\varepsilon}^{k}-b]_i\leq \sqrt{\frac{C_1}{\varepsilon_{k}}}$, where $i\in\{1,\cdots, n\}$. 

Define an auxiliary variable $x^\dagger=x+v_{k}(x_\text{int}-x)$ (and thus $x=\frac{x^\dagger-v_{k}x_\text{int}}{1-v_{k}}$), where $x_\text{int}$ is a strict interior of $Q$ (i.e., $x\in\Omega $ and $Ax_\text{int}-b<0$) and its existence is guaranteed by the Slater condition, and $v_{k}$ is a positive sequence defined by
\begin{eqnarray}
0<v_{k}=\frac{\sqrt{\frac{C_1}{\varepsilon_{k}}}}{\sqrt{\frac{C_1}{\varepsilon_{k}}}+\min_{i=1,\cdots, n}{|[Ax_{\text{int}}-b]_i|}}<1.
\end{eqnarray}

Thus, for any $x\in\Omega$ and $ Ax-b\leq \sqrt{\frac{C_1}{\varepsilon_{k}}}\mathbf{1}_n$, we have $x^\dagger\in\Omega$, and 
\begin{eqnarray}
Ax^\dagger-b&=&(1-v_{k})Ax+v_{k}Ax_\text{int}-b\notag \\
&=&(1-v_{k})(Ax-b)+v_{k}(Ax_\text{int}-b)\notag \\
&\leq&(1-v_{k})\sqrt{\frac{C_1}{\varepsilon_{k}}}\mathbf{1}_n+v_{k}(Ax_\text{int}-b) \notag \\
&\leq& v_{k}(\min_{i=1,\cdots, n}{|[Ax_{\text{int}}-b]_i|}\mathbf{1}_n\notag \\
& &+Ax_\text{int}-b) \leq 0,
\end{eqnarray}
which implies that $x^\dagger\in Q$ if $x\in\Omega$ and $ Ax-b\leq \sqrt{\frac{C_1}{\varepsilon_{k}}}\mathbf{1}_n$.

Denote $x^{k}_Q=\mathcal{P}_Q[x_{\delta\varepsilon}^{k}]=\text{argmin}_{y\in Q}||y-x_{\delta\varepsilon}^{k}||^2$. Then, 
\begin{eqnarray}
||x_{\delta\varepsilon}^{k}-x^{k}_Q||^2&\leq& ||x_{\delta\varepsilon}^{k}-x^\dagger|_{x=x_{\delta\varepsilon}^{k}}||^2\notag \\
&\leq&\max_{x\in\Omega, Ax-b\leq \sqrt{\frac{C_1}{\varepsilon_{k}}}\mathbf{1}_n}||x-x^\dagger||^2\notag \\
&\leq& \max_{x^\dagger\in Q}||\frac{x^\dagger-v_{k}x_\text{int}}{1-v_{k}}-x^\dagger||^2 \notag \\
&=&(\frac{v_{k}}{1-v_{k}})^2\max_{x^\dagger\in Q}||x^\dagger-x_\text{int}||^2\notag \\
&\leq &\frac{C_2}{\varepsilon_{k}}, \label{im1}
\end{eqnarray}
where $C_2$ is some positive constant.

According to (\ref{r2}),
\begin{align} 
0 \geq& (x_{\delta\varepsilon}^{k}-x^*)^TF(x_{\delta\varepsilon}^{k})
+\delta_{k}(||x_{\delta\varepsilon}^{k}||^2-||x^*||^2) \notag \\
= & (x^{k}_Q-x^*)^TF(x_{\delta\varepsilon}^{k})+(x_{\delta\varepsilon}^{k}-x^{k}_Q)^TF(x_{\delta\varepsilon}^{k})\notag\\
 &+\delta_{k}(||x_{\delta\varepsilon}^{k}||^2-||x^*||^2)  \notag \\
=& (x^{k}_Q-x^*)^T(F(x_{\delta\varepsilon}^{k})-F(x^{k}_Q))+(x^{k}_Q-x^*)^TF(x^{k}_Q)\notag\\
 &+(x_{\delta\varepsilon}^{k}-x^{k}_Q)^TF(x_{\delta\varepsilon}^{k})+\delta_{k}(||x_{\delta\varepsilon}^{k}||^2-||x^*||^2)  \notag \\
\geq &-C_3||x_{\delta\varepsilon}^{k}- x^{k}_Q||+\delta_{k}(||x_{\delta\varepsilon}^{k}||^2-||x^*||^2),
\end{align} 
where in the last inequality we used the local Lipschitz continuity of $F$, (\ref{dvi}) for $y=x^{k}_Q$, and the boundedness of $F(x_{\delta\varepsilon}^{k})$ with $C_3$ being some positive constant.

Then, by (\ref{im1}),
\begin{equation} 
||x_{\delta\varepsilon}^{k}||^2-||x^*||^2\leq \frac{C_3}{\delta_{k}}\sqrt{\frac{C_2}{\varepsilon_{k}}}. \label{lim1}
\end{equation} 

Thus, for any convergent subsequence $x_{\delta\varepsilon}^{k_s}$ with an accumulation point $\bar{x}$,
\begin{equation} 
||x_{\delta\varepsilon}^{k_s}||^2-||x^*||^2\leq \frac{C_3}{\delta_{k_s}}\sqrt{\frac{C_2}{\varepsilon_{k_s}}}. \label{lim2}
\end{equation}

Taking the limit of both sides of (\ref{lim2}) gives $||\bar{x}||\leq ||x^*||$. Thus, by the uniqueness of the least-norm solution $x^*$ and the fact that $\bar{x}$ is also a solution to VI($Q$, $F$\textsc{}), any sequence $x_{\delta\varepsilon}^{k}$ has exactly one accumulation point. According to the boundedness of $x_{\delta\varepsilon}^{k}$, $\lim_{k\rightarrow\infty}x_{\delta\varepsilon}^{k}=x^*$. Since $x_{\delta\varepsilon}^{k}$ is arbitrary,  $\lim_{t\rightarrow\infty}x_t^*=x^*$. The proof is completed.

According to (\ref{lim1}), we can further get the following conclusion: for any $t\geq 0$,
\begin{equation} 
||x_t^*||^2-||x^*||^2\leq \frac{C_3}{\delta(t)}\sqrt{\frac{C_2}{\varepsilon(t)}}, \label{lim12}
\end{equation} 
which will be used in the following convergence rate analysis.
\end{proof}

%
%
%
%
%
%
%

\begin{remark}
   The idea of the proof of Lemma \ref{relationship} is motivated from Theorem 2 of \cite{clempner2018tikhonov}. However, \cite{clempner2018tikhonov} solves a polylinear programming problem which is inconsistent with the studied problem in this work. Theorem 5.1 of  \cite{konnov2015regularized} provides a similar conclusion as Lemma \ref{relationship}. However, the assumptions in this work don't satisfy the conditions (C2 or C2') in  Theorem 5.1 of \cite{konnov2015regularized} and the proof methodology is different. 
	
\end{remark}

\color{black}
\begin{remark}
 If  $\lim_{t\rightarrow\infty}(x(t)-x_t^*)=0$, then according to Lemma \ref{relationship}, $\lim_{t\rightarrow\infty}x(t)=x^*$. Thus, the problem is transformed to design an algorithm such that $\lim_{t\rightarrow\infty}(x(t)-x_t^*)=0$, i.e., to design an algorithm to track a solution $x_t^*$ of  VI($\Omega, \Phi_{\delta\varepsilon}$). VI($\Omega, \Phi_{\delta\varepsilon}$) is a regularized penalized VI with time-varying coefficients. The algorithms to deal with a regularized VI without penalty terms have been proposed in \cite{facchinei2007finite}\cite{koshal2010single} using discrete-time algorithms and \cite{cominetti2008strong} using  continuous-time differential equations. However, when considering constraints and penalty functions, the analysis will be different. Here, before designing the algorithm, we first generalize an existing result (appeared in Lemma 3 of \cite{koshal2010single}, Lemma 2.3 of \cite{ boct2020inducing}, and Page 529 of \cite{attouch1996dynamical}, etc) to the penalized case, which plays an important role in the convergence analysis of a regularized VI algorithm.
	\end{remark}

	\begin{lemma} Under the conditions in Lemma \ref{relationship}, we have (a) $\dot{x}_t^* $ exists almost everywhere; (b) there exist positive constants $c_x,c_p$ such that
		$||\dot{x}_t^*||\leq \frac{c_x|\dot{\delta}(t)|+c_p|\dot{\varepsilon}(t)|}{\delta(t)}$ holds almost everywhere. \label{xbound}
	\end{lemma}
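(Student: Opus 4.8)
The plan is to show that the solution map $t\mapsto x_t^*$ is locally Lipschitz in $t$; part (a) then follows at once, since a Lipschitz map on an interval is absolutely continuous and hence differentiable almost everywhere, and part (b) follows by passing to the limit in the Lipschitz estimate. The only ingredients needed are the strong monotonicity of $\Phi_{\delta\varepsilon}$ (valid because $\delta(t)>0$, which also guarantees that $x_t^*$ is well defined and unique), the monotonicity of $F$ from Assumption \ref{monot}, and the monotonicity of $\nabla P$, which holds because the penalty $P$ in (\ref{penalty}) is a sum of squares of clipped affine functions and is therefore convex.

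First I would fix two times $s$ and $t$, write the defining variational inequality (\ref{sol2}) at each time, and test the inequality at time $t$ with $y=x_s^*$ and the one at time $s$ with $y=x_t^*$. Setting $\Delta=x_t^*-x_s^*$ and adding, the first-order term $\Delta^T(F(x_s^*)-F(x_t^*))$ is nonpositive by monotonicity of $F$, and the term $\varepsilon(t)\,\Delta^T(\nabla P(x_s^*)-\nabla P(x_t^*))$ is nonpositive by monotonicity of $\nabla P$. Splitting the regularization contribution as $\delta(s)x_s^*-\delta(t)x_t^*=-\delta(t)\Delta+(\delta(s)-\delta(t))x_s^*$ isolates the strongly monotone term $-\delta(t)\|\Delta\|^2$, and splitting the penalty contribution analogously isolates the remainder $(\varepsilon(s)-\varepsilon(t))\nabla P(x_s^*)$. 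Discarding the two nonpositive pieces yields
\[
\delta(t)\|\Delta\|^2\le(\delta(s)-\delta(t))\Delta^Tx_s^*+(\varepsilon(s)-\varepsilon(t))\Delta^T\nabla P(x_s^*).
\]

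Next I would apply Cauchy--Schwarz and divide by $\|\Delta\|$, giving $\delta(t)\|\Delta\|\le|\delta(s)-\delta(t)|\,\|x_s^*\|+|\varepsilon(s)-\varepsilon(t)|\,\|\nabla P(x_s^*)\|$. Since $\Omega$ is compact by Assumption \ref{gassumption2}(a) and $x_s^*\in\Omega$, the norm $\|x_s^*\|$ is bounded by a constant $c_x$, and since $\nabla P$ is continuous it is bounded on $\Omega$ by a constant $c_p$, so
\[
\|x_t^*-x_s^*\|\le\frac{c_x|\delta(s)-\delta(t)|+c_p|\varepsilon(s)-\varepsilon(t)|}{\delta(t)}.
\]
On any compact interval the smooth positive functions $\delta,\varepsilon$ are Lipschitz and $\delta$ is bounded below by a positive constant, so $x_t^*$ is locally Lipschitz in $t$, which proves (a). For (b), at any $t$ where $\dot{x}_t^*$ exists I divide the last estimate by $|t-s|$ and let $s\to t$; using $|\delta(s)-\delta(t)|/|t-s|\to|\dot{\delta}(t)|$ and $|\varepsilon(s)-\varepsilon(t)|/|t-s|\to|\dot{\varepsilon}(t)|$ yields the claimed bound $\|\dot{x}_t^*\|\le(c_x|\dot{\delta}(t)|+c_p|\dot{\varepsilon}(t)|)/\delta(t)$.

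The step requiring the most care is the comparison of the two time-dependent maps: each of the $\delta$- and $\varepsilon$-dependent terms must be split into a same-coefficient difference, absorbed by monotonicity, plus a coefficient-difference remainder that forms the right-hand side, and one must keep the remainder paired with the frozen solution $x_s^*$ rather than $x_t^*$ so that its norm is controlled by the compactness of $\Omega$. The rest is routine; in particular the convexity of $P$ needed for monotonicity of $\nabla P$ is immediate from (\ref{penalty}), and the denominator $\delta(t)$ is strictly positive by hypothesis.
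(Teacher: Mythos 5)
Your proof is correct and follows essentially the same route as the paper's: you test the two variational inequalities (\ref{sol2}) against each other's solutions, absorb the $F$- and $\nabla P$-difference terms by monotonicity, and bound the coefficient-difference remainders using the compactness of $\Omega$, exactly as the paper does. The only cosmetic differences are that you extract a single combined Lipschitz-in-$t$ estimate that yields both (a) and (b), whereas the paper proves (a) via two separate one-parameter Lipschitz estimates (in $\delta$ with $\varepsilon$ frozen, and in $\varepsilon$ with $\delta$ frozen) and then repeats the combined argument for (b); also, your caution about pairing the remainders with $x_s^*$ rather than $x_t^*$ is unnecessary, since both points lie in the compact set $\Omega$ (indeed the paper pairs one remainder with $x_{t_1}^*$ and the other with $\nabla_x P(x_{t_2}^*)$).
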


	\begin{proof}	
		(a) It suffices to prove that ${x}_t^*$ is locally Lipschitz continuous with respect to both $\delta$ and $\varepsilon$. Let $\delta_1>0$, $\delta_2>0$, and $x_{\delta_1}^*$ and $x_{\delta_2}^*$ be the solutions to VI($\Omega, \Phi_{\delta_1\varepsilon}$) and  VI($\Omega, \Phi_{\delta_2\varepsilon}$), respectively. According to (\ref{sol}),
		 \begin{equation}
		(x_{\delta_2}^*-x_{\delta_1}^*)^T(F(x_{\delta_1}^*)+\delta_1x_{\delta_1}^*+\varepsilon\nabla_x P(x_{\delta_1}^*))\geq 0, \label{e11}
		\end{equation}
		and 
		\begin{equation}
		(x_{\delta_1}^*-x_{\delta_2}^*)^T(F(x_{\delta_2}^*)+\delta_2x_{\delta_2}^*+\varepsilon\nabla_x P(x_{\delta_2}^*))\geq 0. \label{e21}
		\end{equation}
		
		According to the monotonicity of $F$ and $\nabla_x P$,  adding (\ref{e11}) to (\ref{e21}) gives 
		\begin{equation}
		(x_{\delta_2}^*-x_{\delta_1}^*)^T(\delta_1x_{\delta_1}^*-\delta_2x_{\delta_2}^*)\geq 0,
			\end{equation}
			which implies that $||x_{\delta_2}^*-x_{\delta_1}^*||\leq \frac{||x_{\delta_1}^*||}{\delta_2}|\delta_2-\delta_1|$. Thus,  ${x}_t^*$ is locally Lipschitz continuous in $\delta$.
			
			Similarly,  let $\varepsilon_1>0$,  $\varepsilon_2>0$, and $x_{\varepsilon_1}^*$ and $x_{\varepsilon_2}^*$ be the solutions to VI($\Omega, \Phi_{\delta\varepsilon_1}$) and  VI($\Omega, \Phi_{\delta \varepsilon_2}$), respectively. According to (\ref{sol}),
			 \begin{equation}
			(x_{\varepsilon_2}^*-x_{\varepsilon_1}^*)^T(F(x_{\varepsilon_1}^*)+\delta x_{\varepsilon_1}^*+\varepsilon_1\nabla_x P(x_{\varepsilon_1}^*))\geq 0, \label{e31}
			\end{equation}
			and 
			\begin{equation}
			(x_{\varepsilon_1}^*-x_{\varepsilon_2}^*)^T(F(x_{\varepsilon_2}^*)+\delta x_{\varepsilon_2}^*+\varepsilon_2\nabla_x P(x_{\varepsilon_2}^*))\geq 0. \label{e32}
			\end{equation}
			
			According to the monotonicity of $F$ and $\nabla_x P$,  adding (\ref{e31}) to (\ref{e32}) gives 
			 \begin{eqnarray}
			(x_{\varepsilon_2}^*-x_{\varepsilon_1}^*)^T(\varepsilon_1-\varepsilon_2 )\nabla_x P(x_{\varepsilon_1}^*)\geq \delta ||x_{\varepsilon_2}^*-x_{\varepsilon_1}^*||^2,
			\end{eqnarray}
			which implies that $||x_{\varepsilon_2}^*-x_{\varepsilon_1}^*||\leq \frac{||\nabla_x P(x_{\varepsilon_1}^*)||}{\delta}|\varepsilon_1-\varepsilon_2 |$.  Thus,  ${x}_t^*$ is locally Lipschitz continuous in $\varepsilon$.
			\color{black}
			
	 (b) Let $t_1=t_2+h$. Then, according to (\ref{sol}),
	 \begin{eqnarray}
	 (x_{t_2}^*-x_{t_1}^*)^T(F(x_{t_1}^*)+\delta(t_1)x_{t_1}^*+\varepsilon(t_1)\nabla_x P(x_{t_1}^*))\geq 0, \label{e1}
	 \end{eqnarray}
	 and 
	 	 \begin{eqnarray}
	 (x_{t_1}^*-x_{t_2}^*)^T(F(x_{t_2}^*)+\delta(t_2)x_{t_2}^*+\varepsilon(t_2)\nabla_x P(x_{t_2}^*))\geq 0, \label{e2}
	 \end{eqnarray}
	 with $x_{t_1}^*$ and $x_{t_2}^*$ being the solutions to VI($\Omega, \Phi_{\delta\varepsilon}$) at $t_1$ and $t_2$, respectively.
	 
	 According to the monotonicity of $F$,  adding (\ref{e1}) to (\ref{e2}) gives 
	  \begin{align}
	 (x_{t_2}^*-x_{t_1}^*)^T(\delta(t_1)x_{t_1}^*+\varepsilon(t_1)\nabla_x P(x_{t_1}^*)\notag\\-(\delta(t_2)x_{t_2}^*+\varepsilon(t_2)\nabla_x P(x_{t_2}^*)))
	 \geq 0. \label{e3}
	 \end{align}
	 
	 Then, 
	 \begin{align}
	 (x_{t_2}^*-x_{t_1}^*)^T(\delta(t_2)(x_{t_1}^*-x_{t_2}^*)+(\delta(t_1)-\delta(t_2))x_{t_1}^*\notag\\
	 +\varepsilon(t_1)(\nabla_x P(x_{t_1}^*)-\nabla_x P(x_{t_2}^*))\notag \\
	 +(\varepsilon(t_1)-\varepsilon(t_2))\nabla_x P(x_{t_2}^*)))
	 \geq 0, \label{e4}
	 \end{align}
	 which implies that $(x_{t_1}^*-x_{t_2}^*)^T\delta(t_2)(x_{t_1}^*-x_{t_2}^*)\leq (x_{t_2}^*-x_{t_1}^*)^T(\delta(t_1)-\delta(t_2))x_{t_1}^*+(x_{t_2}^*-x_{t_1}^*)^T(\varepsilon(t_1)-\varepsilon(t_2))\nabla_x P(x_{t_2}^*)\leq c_x||x_{t_2}^*-x_{t_1}^*|||\delta(t_1)-\delta(t_2)|+c_p||x_{t_2}^*-x_{t_1}^*|||\varepsilon(t_1)-\varepsilon(t_2)|$, where $c_x$ and $c_p$ are  positive constants satisfying $||x||\leq c_x$ and $||\nabla_x P(x)||\leq c_p$ for all $x\in \Omega$.
	 
	 Thus, $\frac{||x_{t_2}^*-x_{t_1}^*||}{h}\leq\frac{c_x|\delta(t_1)-\delta(t_2)|+c_p|\varepsilon(t_1)-\varepsilon(t_2)|}{\delta(t_2)h}$. Letting $h \rightarrow 0$ and taking
	limits on the inequality gives the conclusion.
	\end{proof}

	\subsection{Full-Decision Information Algorithm: Projected Gradient based Regularized Penalized Dynamics}
	
	 There have been some continuous-time dynamical systems that were proposed to solve an optimization problem (e.g.,\cite{ brown1989some}). However, as remarked in Page 1110 of  \cite{facchinei2007finite}, despite that it is from a discrete-time perspective, when extending to the noncooperative game/VI problem, these algorithms may fail to converge. For example,  the projected gradient decent method  converges for the convex optimization problem but is not necessarily convergent for the monotone VI problem. First, we review several centralized dynamical systems that were proposed  for the noncooperative game/VI  problem.
	
	In \cite{bruck1975asymptotic}, the differential inclusion
	\begin{equation}
	\dot{x}\in-T(x)  \label{dynamics001}
	\end{equation}
	for $T(x)$ being a demipositive  map (which is a more restrictive assumption than maximal monotone) was investigated.
	
	In \cite{cominetti2008strong},  the dynamical system 
	\begin{equation}
	\dot{x}\in-T(x)-\delta(t)x  \label{dynamics002}
	\end{equation}
	for $T(x)$ being a maximal monotone map was investigated, and asymptotic strong convergence was proven.
	
	When there exist constraints in the noncooperative game/VI  problem,  the authors in \cite{attouch2010asymptotic}  proved ergodic convergence for the following dynamics:
		\begin{equation}
	\dot{x}\in-T(x)-\beta(t)\partial \Phi(x), \label{dynamics003}
	\end{equation}
	where $T(x)$ is  maximal monotone, $\beta(t)$ is time-varying and $\Phi(x)$ is a convex function.
	
	In \cite{abbas2015dynamical}, the authors proved the asymptotic convergence of the following proximal-gradient dynamics 
			\begin{equation}
	\dot{x}=-x+\text{prox}_{\mu\Phi}(x-\mu T(x)), \label{dynamics004}
	\end{equation}
	where $\mu>0 $, $T(x)$ is a cocoercive (similar to strongly monotone) operator, and ``$\text{prox}$'' is the proximal point operator of a convex function $\Phi$.
	
	In a recent work \cite{boct2020inducing}, several types of Tikhonov regularized dynamics were proposed including the following one: 
		\begin{align}
	\dot{x}=&-x+z+\gamma(t)(T(x)-T(z)+\varepsilon(t)(x-z)), \notag \\
	z=&J_{\gamma(t)\mathcal{A}}[x-\gamma(t)(T(x)+\varepsilon(t)x)],
	 \label{dynamics005}
	\end{align}
	where $\gamma(t)$ and $\varepsilon(t)$ are time-varying parameters, $T(x)$ is a monotone and Lipschitz map, $\mathcal{A}(x)$ is a maximal monotone map, and $J_{\gamma(t)\mathcal{A}}=(I+\gamma(t)\mathcal{A})^{-1}$ is the resolvent operator. 
	
	For more works, we refer the readers to the above literature and the references therein.
	
	Among the above dynamical systems, (\ref{dynamics005}) has the potential to be applied to solve GNEP (\ref{P1}). However, this method may be not easy to be adopted in a distributed problem. Moreover, a simpler dynamical system may be expected.
	
	In this section, we propose a regularized and penalized dynamical equation and prove the asymptotic convergence, which can be described as follows:
	\begin{eqnarray}
	\dot{x}_i&=&\varsigma(t)(\mathcal{P}_{\Omega_i}[x_i-\gamma(t)(\nabla_{x_{i}} f_{i}(x)+\varepsilon(t) \nabla _{x_i}P(x) \notag \\& &+ \delta(t)x_i)]-x_i), \label{dynamics01}
	\end{eqnarray}
	which is a full-decision information algorithm, where $\delta(t)$, $\varepsilon(t)$, $\varsigma(t)$ and $\gamma(t)$ are time-varying, positive and smooth parameters with bounded initial values, and $P(x)$ is a penalty function defined in (\ref{penalty}). 
	
	The concatenated form of (\ref{dynamics01}) can be written as
		\begin{eqnarray}
	\dot{x} &=&\varsigma(t)(\mathcal{P}_{\Omega }[x -\gamma(t)\Phi_{\delta\varepsilon}(x)]-x),
	\end{eqnarray}
	where $\Phi_{\delta\varepsilon}(x)$ was defined in (\ref{phiy}).

Then, we arrive at the main conclusion of this session as follows:
	
\begin{theorem}
	Under Assumptions \ref{fassumption}--\ref{gassumption2}, the algorithm given in (\ref{dynamics01})  asymptotically converges to the least-norm variational equilibrium of GNEP (\ref{P1}), provided that the following conditions hold: 
		\begin{eqnarray}
			0<r_1(t)<1, \lim_{t\rightarrow\infty} \frac{r_2(t)}{{r}_1(t)\varsigma(t)} =0, 
\notag \\
\lim_{t\rightarrow\infty}\int_{s=0}^tr_1(s)\varsigma(s)ds=\infty, \notag \\
	\int_{s=0}^{t} r_2(s)e^{E_{r_1\varsigma}(s)}ds \text{ is bounded or} \notag \\
\lim_{t\rightarrow\infty}\int_{s=0}^{t} r_2(s)e^{E_{r_1\varsigma}(s)}ds=\infty; 	\label{b1} \\
	\frac{\int_{s=0}^{t} r_2(s)e^{E_{r_1\varsigma}(s)}ds}{e^{E_{r_1\varsigma}(t)}}\leq c_0; \label{b2}  \\
    -\gamma^2(t)(Nb_1^2+Nb_2^2\varepsilon^2(t)+\delta^2(t))\notag \\
    +2\gamma(t)\delta(t)=r_1(t) , \notag \\
    \frac{ |\dot{\delta}(t)|+ |\dot{\varepsilon}(t)|}{\delta(t)}=r_2(t), 
    \lim_{t\rightarrow\infty} \delta(t)=0,  \notag \\
    \lim_{t\rightarrow\infty} \delta^2(t)\varepsilon(t)=\infty,  \lim_{t\rightarrow\infty} \varepsilon(t)=\infty,  \label{C0}
		\end{eqnarray}
	 where $E_{r_1\varsigma}(t)=\int_{s=0}^t \frac{r_1(s)\varsigma(s)}{2}ds$, $b_1$ is a local Lipschitz constant dependent on the initial values, $b_2$ is a global Lipschitz constant, and $c_0$ is a positive constant independent of $b_1$. \label{theorem1}
\end{theorem}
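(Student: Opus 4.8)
The plan is to reduce the theorem to a tracking statement and then control the tracking error by a Lyapunov/Grönwall argument. By the remark preceding Lemma \ref{xbound}, it suffices to prove $\lim_{t\to\infty}(x(t)-x_t^*)=0$, since Lemma \ref{relationship} then yields $\lim_{t\to\infty}x(t)=x^*$, the least-norm solution of VI($Q,F$), which is the least-norm variational equilibrium of GNEP (\ref{P1}). The key starting observation is that $x_t^*$, the unique solution of VI($\Omega,\Phi_{\delta\varepsilon}$), admits the fixed-point characterization $x_t^*=\mathcal{P}_\Omega[x_t^*-\gamma(t)\Phi_{\delta\varepsilon}(x_t^*)]$ for every $\gamma(t)>0$, so the right-hand side of the concatenated dynamics vanishes exactly at $x=x_t^*$.

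First I would establish a contraction estimate for the projected-gradient map. The map $\Phi_{\delta\varepsilon}$ is strongly monotone with modulus $\delta(t)$ (since $F$ and $\nabla_x P$ are monotone while $\delta(t)x$ supplies the strong part) and Lipschitz, with squared Lipschitz constant of the form $Nb_1^2+Nb_2^2\varepsilon^2(t)+\delta^2(t)$, where $b_1$ is the local Lipschitz constant of $F$ on the region visited by the trajectory and $b_2$ the global Lipschitz constant of $\nabla_x P$. Expanding $\|(x-x_t^*)-\gamma(t)(\Phi_{\delta\varepsilon}(x)-\Phi_{\delta\varepsilon}(x_t^*))\|^2$ and invoking the non-expansiveness of the projection (Lemma \ref{projector}(a)) shows that $u(t):=\mathcal{P}_\Omega[x-\gamma(t)\Phi_{\delta\varepsilon}(x)]$ obeys $\|u(t)-x_t^*\|\le\sqrt{1-r_1(t)}\,\|x-x_t^*\|$, where $r_1(t)=2\gamma(t)\delta(t)-\gamma^2(t)(Nb_1^2+Nb_2^2\varepsilon^2(t)+\delta^2(t))$ is precisely the quantity defined in (\ref{C0}); the requirement $0<r_1(t)<1$ makes this a genuine contraction.

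Next I would differentiate $V(t)=\tfrac12\|x(t)-x_t^*\|^2$. Using $\dot x=\varsigma(t)(u(t)-x)$, the splitting $u-x=(u-x_t^*)-(x-x_t^*)$, the contraction estimate, and $\sqrt{1-r_1}\le 1-r_1/2$, I obtain
\[
\dot V\le -r_1(t)\varsigma(t)V+\|x-x_t^*\|\,\|\dot x_t^*\|
\]
almost everywhere (Lemma \ref{xbound}(a) guarantees $\dot x_t^*$ exists a.e.). Bounding $\|\dot x_t^*\|$ via Lemma \ref{xbound}(b) and the definition $r_2(t)=(|\dot\delta|+|\dot\varepsilon|)/\delta$ reduces this to the linear differential inequality $\dot\phi\le-\tfrac12 r_1\varsigma\,\phi+C r_2$ for $\phi(t)=\|x(t)-x_t^*\|$ and some constant $C$. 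The comparison lemma then gives $\phi(t)\le\phi(0)e^{-E_{r_1\varsigma}(t)}+C e^{-E_{r_1\varsigma}(t)}\int_0^t r_2(s)e^{E_{r_1\varsigma}(s)}\,ds$. The first term vanishes because $\int_0^t r_1\varsigma\,ds\to\infty$ forces $E_{r_1\varsigma}(t)\to\infty$; for the second, in the bounded-integral case of (\ref{b1}) the prefactor $e^{-E_{r_1\varsigma}(t)}\to0$ suffices, while in the divergent case L'Hôpital's rule gives $\lim_{t\to\infty}\int_0^t r_2 e^{E_{r_1\varsigma}}\,ds\,/\,e^{E_{r_1\varsigma}(t)}=\lim_{t\to\infty}2r_2/(r_1\varsigma)=0$ by (\ref{b1}). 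Hence $\phi(t)\to0$, completing the reduction.

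I expect the main obstacle to be the interplay between the local (not global) Lipschitz continuity of $F$ and the boundedness of the trajectory: the contraction constant $r_1(t)$ depends on the local constant $b_1$, whose use is only legitimate once $x(t)$ is known to remain in a bounded set. This is exactly where condition (\ref{b2}) enters, furnishing an a priori uniform bound on $\|x(t)-x_t^*\|$ (hence on $\|x(t)\|$, since $x_t^*\in\Omega$ is bounded) and thereby closing the otherwise-circular dependence between the contraction estimate and the trajectory bound. Making this step rigorous — rather than the routine Grönwall and L'Hôpital computations — is the delicate point of the argument.
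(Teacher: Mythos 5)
Your proposal is correct and follows essentially the same route as the paper's proof: the same Lyapunov function $V=\tfrac12\|x-x_t^*\|^2$, the same contraction estimate with exactly the $r_1(t)$ of (\ref{C0}), the same Gr\"onwall/comparison bound, the same L'H\^opital case split under (\ref{b1}), and the same final appeal to Lemma \ref{relationship}. The one step you flag but do not carry out --- legitimizing the local Lipschitz constant $b_1$ --- is closed in the paper exactly as you anticipate: it fixes $\Upsilon_1=\{q:\|q\|\le\|q(0)\|+c\}$ with $c\ge c_{px}c_0$, supposes a first exit time $T$ from $\Upsilon_1$, applies the Gr\"onwall bound on $[0,T)$ where the local Lipschitz estimate is valid, and uses (\ref{b2}) to conclude $\|q(T)\|\le\|q(0)\|+c_{px}c_0$, i.e.\ $q(T)$ lies in a strictly smaller set $\Upsilon_2\subseteq\Upsilon_1$, a contradiction.
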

\begin{proof}
Let $q(t)=x-x_t^*$. Define a Lyapunov candidate function $V(t,q)=\frac{1}{2}q^Tq$. 

According to Lemma  \ref{projector},
\begin{eqnarray}
& &||\mathcal{P}_{\Omega}[x-\gamma(t)\Phi_{\delta\varepsilon}(x)]-x_t^*||^2 \notag\\
&=&  ||\mathcal{P}_{\Omega}[x-\gamma(t)\Phi_{\delta\varepsilon}(x)]-\mathcal{P}_{\Omega}[x_t^*-\gamma(t)\Phi_{\delta\varepsilon}(x_t^*)]||^2\notag\\
&\leq&||x-x_t^*-\gamma(t) (\Phi_{\delta\varepsilon}(x)-\Phi_{\delta\varepsilon}(x_t^*))||^2 \notag\\
&=&||x-x_t^*||^2-2\gamma(t)(x-x_t^*)^T(\Phi_{\delta\varepsilon}(x)-\Phi_{\delta\varepsilon}(x_t^*))\notag\\
& &+\gamma^2(t)||\Phi_{\delta\varepsilon}(x)-\Phi_{\delta\varepsilon}(x_t^*)||^2.  \label{d1} 
\end{eqnarray}

Let $\Upsilon_1=\{q\in\mathbb{R}^{N }|||q||
\leq ||q(0)||+c\}
$, where $c$ is a positive constant satisfying $c_{px}c_0\leq c$ and $c_{px}=\min\{c_{p}, c_x\}$. According to the boundedness of $x_t^*$, and the assumption that $f_i$ is twicely continuously differentiable, there exists a positive constant $b_1$ such that for any $x\in \Upsilon_{11}\triangleq\{x\in\mathbb{R}^N|||x||\leq ||q(0)||+c+c_x\} $,
\begin{eqnarray}
| \nabla_{x_{i}} f_{i}(x)-\nabla_{x_{i}} f_{i}(x_t^*)|\leq b_1||x-x_t^*||. \label{fLipschitz01}
\end{eqnarray}

Furthermore, it can be verified that $\nabla_{x_{i}} P(x)$ is globally Lipschitz continuous since the constraints are linear (by using compose function  properties). Thus, there exists a global Lipschitz constant  $b_2>0$, such that
\begin{eqnarray}
| \nabla_{x_{i}} P(x)-\nabla_{x_{i}} P(x_t^*)|\leq b_2||x-x_t^*||, \label{fLipschitz012}
\end{eqnarray}
which implies that 
\begin{eqnarray}
& &||\Phi_{\delta\varepsilon}(x)-\Phi_{\delta\varepsilon}(x_t^*)||^2 \notag \\
& \leq& (Nb_1^2+Nb_2^2\varepsilon^2(t)+\delta^2(t))||x-x_t^*||^2 .
\end{eqnarray}

Moreover, it can be obtained that if $q(t)\in \Upsilon_{1}$,  $x(t)\in \Upsilon_{11}$. Thus, based on (\ref{d1}), for all $q(t)\in \Upsilon_{1}$,
	\begin{eqnarray}
||\mathcal{P}_{\Omega}[x-\gamma(t)\Phi_{\delta\varepsilon}(x)]-x_t^*||^2 \notag\\
	\leq  (1-r_1(t))||x-x_t^*||^2,
\end{eqnarray}
where $r_1(t)=-\gamma^2(t)(Nb_1^2+Nb_2^2\varepsilon^2(t)+\delta^2(t))+2\gamma(t)\delta(t)$ as defined in the theorem. \footnote{To make it easier for the reader to follow this work, we define  $r_1(t)$ again in the proof although it was already defined in the theorem.}

Since $q(0)\in\Upsilon_1$, we prove that  $q(t)\in\Upsilon_{1}$ for all $t\geq 0$ by proof of contradiction. Suppose that there exists a time instant $T$ such that $q(t)\in\Upsilon_1$ for all $0\leq t<T$ and $q(T)\notin\Upsilon_1$ .

Taking the derivative of $V$ gives 
	\begin{eqnarray}
\dot{V}&\overset{a.e.}=&(x-x_t^*)^T(\dot{x}-\dot{x}_t^*)\notag \\
&\overset{a.e.}=&(x-x_t^*)^T(\varsigma(t)(\mathcal{P}_{\Omega}[x-\gamma(t)\Phi_{\delta\varepsilon}(x)]-x_t^*)\notag \\
& &-\varsigma(t)(x-x_t^*))-(x-x_t^*)^T\dot{x}_t^*\notag\\
&\overset{a.e.}\leq&-\frac{1}{2}r_1(t)\varsigma(t)||x-x_t^*||^2-(x-x_t^*)^T\dot{x}_t^* \notag \\
&\overset{a.e.}\leq&-r_1(t)\varsigma(t)V+\sqrt{2V}||\dot{x}_t^*||, 
\end{eqnarray}
for all $0 \leq t< T$.

Define  $\phi=\sqrt{2V}=||q||$. Taking the derivative of $\phi$ gives 
	\begin{eqnarray}
\dot{\phi}&\overset{a.e.}\leq& -\frac{r_1(t)\varsigma(t)}{2}\phi +||\dot{x}_t^*|| \notag\\
&\overset{a.e.}\leq& -\frac{r_1(t)\varsigma(t)}{2}\phi+c_{px} r_2(t), \label{x1}
\end{eqnarray}
for all $0\leq t<T$ by using Lemma \ref{xbound}, where $r_2(t)=\frac{ |\dot{\delta}(t)|+ |\dot{\varepsilon}(t)|}{\delta(t)}$.

Define an energy function $E_{r_1\varsigma}(t)\triangleq \int_{s=0}^t \frac{r_1(s)\varsigma(s)}{2}ds$. Then, 
\begin{equation}
\frac{d}{dt}({\phi}e^{E_{r_1\varsigma}(t)})\overset{a.e.}\leq c_{px}r_2(t) e^{E_{r_1\varsigma}(t)}, \label{integrated1}
\end{equation}
for all $0\leq t<T$.

Integrating both sides of (\ref{integrated1}) from 0 to $T$ gives
\begin{equation}
\phi(T)e^{E_{r_1\varsigma}(T)}-\phi(0)\leq \int_{s=0}^{T}c_{px}r_2(s)e^{E_{r_1\varsigma}(s)}ds,
\end{equation}
which implies
\begin{eqnarray}
0\leq\phi(T)\leq e^{-E_{r_1\varsigma}(T)}(\phi(0)+c_{px}\int_{s=0}^{T} r_2(s)e^{E_{r_1\varsigma}(s)}ds). \label{lip1}
\end{eqnarray}

According to (\ref{lip1}), $q(T)\in\Upsilon_2\triangleq\{q\in\mathbb{R}^{N}|||q||
\leq  ||q(0)|| +c_{px}c_0 \}\subseteq\Upsilon_1$. This contradicts with the assumption $q(T)\notin\Upsilon_1$.

Thus, it can be obtained that $q(t)\in\Upsilon_1$ for all $t\geq0$. Then, for all $t\geq 0$,
\begin{eqnarray}
0\leq\phi(t)\leq e^{-E_{r_1\varsigma}(t)}(\phi(0)+c_{px}\int_{s=0}^{t} r_2(s)e^{E_{r_1\varsigma}(s)}ds). \label{lip12}
\end{eqnarray}

Since $\lim_{t\rightarrow\infty}\int_{s=0}^tr_1(s)\varsigma(s)ds=\infty$, $\lim_{t\rightarrow\infty}e^{-E_{r_1\varsigma}(t)}=0$. If $\int_{s=0}^{t} r_2(s)e^{E_{r_1\varsigma}(s)}ds$ is bounded, $\lim_{t\rightarrow\infty}\frac{\int_{s=0}^{t} r_2(s)e^{E_{r_1\varsigma}(s)}ds}{e^{E_{r_1\varsigma}(t)}}=0$. If $\lim_{t\rightarrow\infty}\int_{s=0}^{t} r_2(s)e^{E_{r_1\varsigma}(s)}ds$ equals to $\infty$, according to L'Hospital's rule, 
\begin{align}
\lim_{t\rightarrow\infty}\frac{\int_{s=0}^{t} r_2(s)e^{E_{r_1\varsigma}(s)}ds}{e^{E_{r_1\varsigma}(t)}}\notag\\
=\lim_{t\rightarrow\infty}\frac{r_2(t)e^{E_{r_1\varsigma}(t)} }{\frac{e^{E_{r_1\varsigma}(t)} {r}_1(t)\varsigma(t)}{2}}=0. \label{lim0}
\end{align}

Then, $\lim_{t\rightarrow\infty}\frac{\int_{s=0}^{t} r_2(s)e^{E_{r_1\varsigma}(s)}ds}{e^{E_{r_1\varsigma}(t)}}=0$.  According to  (\ref{lip12}), 	$\lim_{t\rightarrow\infty}\phi(t)=0$.

Thus, $\lim_{t\rightarrow\infty}||x-x_t^*||=0$. Based on Lemma \ref{relationship}, $\lim_{t\rightarrow\infty}||x-x^*||=0$.

	\end{proof}
\begin{remark}
	The solutions to the conditions in Theorem \ref{theorem1} exist. Let  $\varsigma(t)=1$, $r_1(t)=\frac{(1+t)^{-\frac{1}{7}}}{N+N(1+t)^{ \frac{1}{3}}+(1+t)^{-\frac{1}{7}}}$, $r_2(t)=\frac{\frac{1}{14}(1+t)^{-\frac{15}{14}}+\frac{1}{6b_2}(1+t)^{-\frac{5}{6}}}{(1+t)^{-\frac{1}{14}}}$, which satisfy the conditions in (\ref{b1}) and (\ref{b2}). Then,  $\delta(t)=b_1(1+t)^{-\frac{1}{14}}$,  $\varepsilon(t)=\frac{b_1}{b_2}(1+t)^{\frac{1}{6}}$, $\gamma(t)=\frac{\delta (t)}{Nb_1^2+Nb_2^2\varepsilon^2(t)+\delta^2(t)}$ satisfy the conditions in (\ref{C0}). \label{remark1} 
  \end{remark}

\begin{remark}
The introduction  of  $\varsigma(t)$ enables more abundant selection of parameters. For example, (a) $\varsigma(t)=(1+t)^6$, $\delta(t)=b_1(1+t)^{-\frac{1}{2}}$,  $\varepsilon(t)=\frac{b_1}{b_2}(1+t)^{1.2}$, $\gamma(t)=\frac{\delta (t)}{Nb_1^2+Nb_2^2\varepsilon^2(t)+\delta^2(t)}$; (b) $\varsigma(t)=e^{4(a+b)t}$, $\delta(t)=b_1e^{-at}$,  $\varepsilon(t)=\frac{b_1}{b_2}e^{bt}$, $\gamma(t)=\frac{\delta (t)}{Nb_1^2+Nb_2^2\varepsilon^2(t)+\delta^2(t)}$  with $b>2a>0$.  \label{r5}
\end{remark}

\begin{remark}
The condition in (\ref{b2}) is mainly used to enable an upper bound independent of the local Lipschitz constant $b_1$, while the boundedness of $\frac{\int_{s=0}^{t} r_2(s)e^{E_{r_1\varsigma}(s)}ds}{e^{E_{r_1\varsigma}(t)}}$ can be  automatically  satisfied provided that  the conditions  in (\ref{b1}) and (\ref{C0}) hold. The condition in (\ref{b2})  may restrict the selection of the parameters. For the examples given in Remark \ref{r5}, three parameters $\delta(t)$, $\varepsilon(t)$, and $\gamma(t)$ are related to the local Lipschitz constant $b_1$. If the gradients are globally Lipschitz continuous instead of locally Lipschitz continuous, this condition can be removed, as shown in the following lemma.
\end{remark}

\begin{proposition}
Under Assumptions \ref{fassumption}--\ref{gassumption2} and assuming that $\nabla_{x_i}f_i(x)$ is globally Lipschitz continuous, the algorithm given in (\ref{dynamics01})  asymptotically converges to the least-norm variational equilibrium of GNEP (\ref{P1}), provided that the conditions in  (\ref{b1}) and (\ref{C0}) hold. \label{theorem2}
 \end{proposition}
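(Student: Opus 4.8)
The plan is to reproduce the Lyapunov analysis of Theorem~\ref{theorem1} verbatim and to isolate the single place where condition~(\ref{b2}) was invoked---the confinement of the trajectory to $\Upsilon_1$---which becomes superfluous once $\nabla_{x_i}f_i$ is globally Lipschitz. I keep $q(t)=x-x_t^*$, $V=\frac{1}{2}q^Tq$ and $\phi=\sqrt{2V}=\|q\|$ as before. The only structural change is that the bound~(\ref{fLipschitz01}), $|\nabla_{x_i}f_i(x)-\nabla_{x_i}f_i(x_t^*)|\le b_1\|x-x_t^*\|$, now holds with a single global constant $b_1$ for every $x\in\mathbb{R}^N$ rather than only on the ball $\Upsilon_{11}$. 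Combined with the already-global Lipschitz bound~(\ref{fLipschitz012}) for $\nabla_{x_i}P$, this gives $\|\Phi_{\delta\varepsilon}(x)-\Phi_{\delta\varepsilon}(x_t^*)\|^2\le(Nb_1^2+Nb_2^2\varepsilon^2(t)+\delta^2(t))\|x-x_t^*\|^2$ on all of $\mathbb{R}^N$.

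First I would feed this global estimate into the projection inequality of Lemma~\ref{projector} exactly as in the proof of Theorem~\ref{theorem1}, obtaining $\|\mathcal{P}_\Omega[x-\gamma(t)\Phi_{\delta\varepsilon}(x)]-x_t^*\|^2\le(1-r_1(t))\|x-x_t^*\|^2$ with $r_1(t)$ as defined in~(\ref{C0}). The essential point is that this contraction now holds for all $x\in\mathbb{R}^N$, i.e.\ for all $q(t)$, with no restriction to $\Upsilon_1$. Consequently the entire proof-by-contradiction step used to confine $q(t)$ to $\Upsilon_1$---which was the only place condition~(\ref{b2}) entered---can be deleted. Differentiating $V$ along~(\ref{dynamics01}) and invoking Lemma~\ref{xbound} exactly as in the derivation of~(\ref{x1}) then gives, almost everywhere and for all $t\ge 0$, the differential inequality $\dot\phi\le-\frac{r_1(t)\varsigma(t)}{2}\phi+c_{px}r_2(t)$.

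Next I would integrate this inequality with the integrating factor $e^{E_{r_1\varsigma}(t)}$, where $E_{r_1\varsigma}(t)=\int_{0}^t\frac{r_1(s)\varsigma(s)}{2}\,ds$, to recover the bound~(\ref{lip12}), now valid for every $t\ge 0$ without any confinement hypothesis: $0\le\phi(t)\le e^{-E_{r_1\varsigma}(t)}\bigl(\phi(0)+c_{px}\int_{0}^t r_2(s)e^{E_{r_1\varsigma}(s)}\,ds\bigr)$. The conclusion $\lim_{t\to\infty}\phi(t)=0$ then follows verbatim from the conditions in~(\ref{b1}): the requirement $\lim_{t\to\infty}\int_0^t r_1\varsigma=\infty$ forces $e^{-E_{r_1\varsigma}(t)}\to0$, while the dichotomy on $\int_0^t r_2 e^{E_{r_1\varsigma}}$ together with $\lim_{t\to\infty}\frac{r_2}{r_1\varsigma}=0$ drives the ratio $\frac{\int_0^t r_2 e^{E_{r_1\varsigma}}}{e^{E_{r_1\varsigma}(t)}}$ to zero via L'Hospital's rule, exactly as in~(\ref{lim0}). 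None of these limiting arguments uses~(\ref{b2}), so its removal is harmless. Finally, $\lim_{t\to\infty}\|x-x_t^*\|=0$ combined with Lemma~\ref{relationship} (whose hypotheses are the limit conditions already listed in~(\ref{C0})) yields $\lim_{t\to\infty}x(t)=x^*$, the least-norm variational equilibrium.

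Since the argument is in effect a corollary of Theorem~\ref{theorem1}, there is no genuine analytical obstacle; the only thing requiring care is the bookkeeping that pins down precisely where~(\ref{b2}) was consumed. The main point to verify is that condition~(\ref{b2}) appeared \emph{exclusively} in the contradiction establishing $q(t)\in\Upsilon_1$, and nowhere in the derivation of the differential inequality, the integration, or the final limit---so that upgrading $b_1$ to a global constant and excising the confinement step leaves a complete, self-contained proof.
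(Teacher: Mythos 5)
Your proposal is correct and in substance matches the paper's own (very terse) proof: both reduce the proposition to the Lyapunov argument of Theorem \ref{theorem1}, with the global Lipschitz hypothesis making the ball $\Upsilon_{11}$ irrelevant so that condition (\ref{b2}) can be dropped. The only difference is bookkeeping in how (\ref{b2}) is discharged: you excise the confinement-to-$\Upsilon_1$ contradiction step outright (since the contraction now holds on all of $\mathbb{R}^N$), whereas the paper keeps the structure of Theorem \ref{theorem1} and instead notes that the uniform boundedness of $\int_{s=0}^{t} r_2(s)e^{E_{r_1\varsigma}(s)}ds \big/ e^{E_{r_1\varsigma}(t)}$ is automatic under (\ref{b1}) because that ratio tends to zero (and, with $b_1$ now global, no circularity between $c_0$ and $b_1$ arises) --- two equivalent ways of making the same observation.
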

\begin{proof}
	The proof is similar to the proof of Theorem \ref{theorem1}. Note that the uniform boundedness of $\frac{\int_{s=0}^{t} r_2(s)e^{E_{r_1\varsigma}(s)}ds}{e^{E_{r_1\varsigma}(t)}}$ can be guaranteed by the fact that $\lim_{t\rightarrow\infty}\frac{\int_{s=0}^{t} r_2(s)e^{E_{r_1\varsigma}(s)}ds}{e^{E_{r_1\varsigma}(t)}}=0$ under the condition in (\ref{b1}).
\end{proof}
\begin{remark}
	 It can be easily verified that the parameters  $\varsigma(t)=(1+t)^6$, $\delta(t)= (1+t)^{-\frac{1}{2}}$,  $\varepsilon(t)= (1+t)^{1.2}$, $\gamma(t)=\frac{\delta (t)}{Nb_1^2+Nb_2^2\varepsilon^2(t)+\delta^2(t)}$ satisfy the conditions.  In this case, only $\gamma(t)$ is dependent on $b_1$ and $b_2$.
	\end{remark}

\begin{remark}
	The local Lipschitz constants $b_1$ may be unknown. One can choose a sufficiently large parameter according to (\ref{fLipschitz01}). In addition, in many scenarios, the Lipschitz constants may be estimated according to the problem data. The dependency on the Lipschitz constants is common in the existing literature. It is noted that recently there are some works investigating adaptive methods to remove this dependency. However, the extension of this technique to the proposed method is not easy. This is partially caused by the time-varying property of the parameters. The projection operator may also affect the usage of the adaptive technique.
\end{remark}

\color{black}
 
	\subsection{Partial-Decision Information Algorithm }

In this section, we consider that each player  can only have access to the state information of its neighbors.

The following distributed updating law is designed:
\begin{align}
\dot{x}_i=&\varsigma(t)(\mathcal{P}_{\Omega_i}[x_i-\gamma(t)(\nabla_{x_{i}} f_{i}(y_i)+\varepsilon(t) \nabla _{x_i}P(y_i) \notag \\
 &+ \delta(t)x_i)]-x_i), \notag \\
\dot{y}_{ij}= &-w(t)\sum_{k=1}^{N}a_{ik}(y_{ij}-y_{kj}),j\in\mathcal{V}\backslash i, \label{dynamics1}
\end{align}
where  $y_{ij}\in\mathbb{R}$,  $y_{ii}=x_i$, $y_i=[y_{ij}]_{j\in\mathcal{V}}$,  $\varsigma(t)$, $\delta(t)$, $\varepsilon(t)$, $\gamma(t)$ and $w(t)$ are time-varying, positive and smooth parameters with bounded initial values, and $P(\cdot)$ is a penalty function defined in  (\ref{penalty}).
\color{black}

For agent $i$, define a subgraph $\mathcal{G}_{i}=\left\{ \mathcal{V}_{i},\mathcal{E}_{i}\right\} $,
where $\mathcal{V}_{i}=\mathcal{V}\backslash\{i\}$ and $\mathcal{E}_{i}\subset\mathcal{V}_{i}\times\mathcal{V}_{i}$
denote the set of vertices and edges, respectively. Let $L_{i}$ be
the Laplacian matrix of $\mathcal{G}_{i}$ and $B_{i}=\text{{diag}}\{a_{1i},\cdots,a_{(i-1)i},a_{(i+1)i},\cdots,a_{Ni}\}\in\mathbb{R}^{(N-1)\times(N-1)}$. According to Assumption \ref{graph} and Theorem 4 of \cite{Khoo}, it can be obtained that the matrix $L_{i}+B_{i}$ is symmetric and positive definite.

Let $Y=[y_i]_{i\in\mathcal{V}}\in\mathbb{R}^{N^2}$, $\bar{Y}_{i}=[y_{ji}]_{j\in\mathcal{V}\backslash i}\in\mathbb{R}^{N-1}$ and $\hat{Y}_i=\bar{Y}_{i}-x_i\mathbf{1}_{N-1}\in\mathbb{R}^{N-1}$. The concatenated form of (\ref{dynamics1}) can be written as
	\begin{align}
\dot{x} =&\varsigma(t)(\mathcal{P}_{\Omega }[x -\gamma(t)\Phi_{\delta\varepsilon}(Y)]-x),\notag \\
\dot{\bar{Y}}_{i}  =&-w(t)(L_{i}+B_{i})\hat{Y}_{i},i\in\mathcal{V},
\end{align}
where $\Phi_{\delta\varepsilon}(Y)=F(Y)+\delta(t)x+\varepsilon(t)\nabla_x P(Y)$, $F(Y)\triangleq \left[ \nabla_{x_{i}} f_{i}(y_i)\right]_{i\in\mathcal{V}}$ and $\nabla_x P(Y)\triangleq \left[ \nabla_{x_{i}} P (y_i)\right]_{i\in\mathcal{V}}$.

\begin{remark}
According to (\ref{dynamics1}), the algorithm for player $i$ uses the local states $x_i$ and $y_{ij}$, and the neighboring states $y_{kj}$ for $k\in\mathcal{V}_i$. Thus, the algorithm uses only partial-decision information. 
\end{remark}

%

The main result of this section can be described as follows:

\color{black}
\begin{theorem}
	Under Assumptions \ref{fassumption}--\ref{graph}, the algorithm given in (\ref{dynamics1})  asymptotically converges to the least-norm variational equilibrium of GNEP (\ref{P1}), provided that the following conditions hold
	\begin{eqnarray}
	0<u_1(t)<1, \lim_{t\rightarrow\infty} \frac{r_2(t)}{{u}_1(t)\varsigma(t)}dt=0, 
	\notag \\
	\lim_{t\rightarrow\infty}\int_{s=0}^tu_1(s)\varsigma(s)ds=\infty, \notag \\
	\int_{s=0}^{t} r_2(s)e^{E_{u_1\varsigma}(s)}ds \text{ is bounded or} \notag \\
	\lim_{t\rightarrow\infty}\int_{s=0}^{t} r_2(s)e^{E_{u_1\varsigma}(s)}ds=\infty; 	\label{b12} \\
	\frac{\int_{s=0}^{t} r_2(s)e^{E_{u_1\varsigma}(s)}ds}{e^{E_{u_1\varsigma}(t)}}\leq c_0'; \label{b22}  \\
	-\gamma^2(t)(Nb_1'^2+Nb_2^2\varepsilon^2(t)+\delta^2(t)+1)\notag \\
	+2\gamma(t)\delta(t)=u_1(t) , \notag \\
	\frac{ |\dot{\delta}(t)|+ |\dot{\varepsilon}(t)|}{\delta(t)}=r_2(t), 
	\lim_{t\rightarrow\infty} \delta(t)=0,  \notag \\
	\lim_{t\rightarrow\infty} \delta^2(t)\varepsilon(t)=\infty,  \lim_{t\rightarrow\infty} \varepsilon(t)=\infty;  \label{C1} \\
	\theta(t)\geq \frac{1}{4}u_1(t)\varsigma(t), \label{C2}
	\end{eqnarray}
	where $E_{u_1\varsigma}(t)=\int_{s=0}^t \frac{u_1(s)\varsigma(s)}{4}ds$,   $\theta(t)=w(t)\lambda_{\text{min}}-\frac{u_2(t)\varsigma(t)}{2}-\frac{\varsigma (t)(2-u_1(t))(N-1)^2}{u_1(t)}-\frac{u_1(t)u_2(t)\varsigma(t)}{4(2-u_1(t))},$  $u_2(t)= (\gamma^2(t)+1)(b_3^2+b_2^2\varepsilon^2(t))$, $b_1'$ and $b_3$ are two local Lipschitz constants dependent on the initial values, $b_2$ is a global Lipschitz constant,  $c_0'$ is a positive constant independent of $b_1'$ and $b_3$, and $\lambda_{\min}$ is the minimum value among the eigenvalues of $L_i+B_i$, $i\in\mathcal{V}$. \label{t1}
\end{theorem}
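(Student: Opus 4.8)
The plan is to extend the Lyapunov argument of Theorem~\ref{theorem1} by augmenting the tracking error with the consensus (estimation) error, and to show that a sufficiently large consensus gain $w(t)$ forces the coupled system to contract at the rate $\tfrac14 u_1(t)\varsigma(t)$. Concretely, I would take the composite Lyapunov candidate
\[
V(t) = \tfrac12 \|q\|^2 + \tfrac12 \sum_{i\in\mathcal{V}} \hat{Y}_i^T \hat{Y}_i,
\]
where $q = x - x_t^*$ as in Theorem~\ref{theorem1} and $\hat{Y}_i = \bar{Y}_i - x_i \mathbf{1}_{N-1}$ collects all agents' estimation errors of $x_i$. The quantity $\sum_i \|\hat{Y}_i\|^2$ coincides (after reindexing) with $\sum_i \|y_i - x\|^2$, i.e.\ the total deviation of the local estimates from the true joint state, which is precisely the gap between using $\Phi_{\delta\varepsilon}(Y)$ and $\Phi_{\delta\varepsilon}(x)$ in the projected-gradient step.

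For the tracking part I would reproduce the nonexpansiveness estimate \eqref{d1}, but insert and subtract $\Phi_{\delta\varepsilon}(x)$ inside the projection argument. The contribution $\Phi_{\delta\varepsilon}(x) - \Phi_{\delta\varepsilon}(x_t^*)$ reproduces the contraction $(1-u_1(t))\|q\|^2$ of Theorem~\ref{theorem1}, now with the extra ``$+1$'' inside $u_1(t)$ absorbing the $\gamma^2(t)$-scaling of a mixed term, while the perturbation $\Phi_{\delta\varepsilon}(Y) - \Phi_{\delta\varepsilon}(x)$ is controlled via the local Lipschitz constants $b_1',b_3$ and the global constant $b_2$, yielding a bound proportional to $\big(b_3^2 + b_2^2\varepsilon^2(t)\big)\sum_i\|\hat{Y}_i\|^2$. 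For the consensus part, differentiating $\tfrac12\sum_i\|\hat{Y}_i\|^2$ along $\dot{\bar{Y}}_i = -w(t)(L_i+B_i)\hat{Y}_i$ and using $L_i+B_i \succeq \lambda_{\min} I$ (from Assumption~\ref{graph} and Theorem~4 of \cite{Khoo}) produces the strongly dissipative term $-w(t)\lambda_{\min}\sum_i\|\hat{Y}_i\|^2$; the residual terms arise because $\dot{x}_i$ enters $\dot{\hat{Y}}_i$, and I would bound these by the gradient and penalty-gradient bounds, which is the origin of the factor $u_2(t)=(\gamma^2(t)+1)(b_3^2+b_2^2\varepsilon^2(t))$.

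The crux is to collect all cross terms between $q$ and $\hat{Y}$ and split them with Young's inequality using appropriately chosen weights, so that every mixed contribution is dominated. Carrying this out yields exactly the auxiliary quantity $\theta(t)$ defined in the theorem, and the hypothesis \eqref{C2}, namely $\theta(t)\ge\tfrac14 u_1(t)\varsigma(t)$, is precisely what guarantees that the consensus error decays at least as fast as the tracking error. With $\phi=\sqrt{2V}$ one then obtains the scalar comparison inequality
\[
\dot\phi \overset{a.e.}\le -\tfrac{u_1(t)\varsigma(t)}{4}\,\phi + c_{px} r_2(t),
\]
mirroring \eqref{x1} with the contraction rate halved. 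From here the argument is identical to Theorem~\ref{theorem1}: multiply by the integrating factor $e^{E_{u_1\varsigma}(t)}$, integrate, and invoke \eqref{b12}--\eqref{b22} together with L'Hospital's rule to conclude $\lim_{t\to\infty}\phi(t)=0$, which forces both $\|x-x_t^*\|\to0$ and the consensus error to vanish; Lemma~\ref{relationship} then gives $\lim_{t\to\infty}\|x-x^*\|=0$. As in Theorem~\ref{theorem1}, the local constants $b_1',b_3$ are legitimized by first running a forward-invariance argument (proof by contradiction on a ball $\Upsilon_1$ containing the initial condition) so that the Lipschitz estimates hold on the region actually visited.

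I expect the main obstacle to be the bookkeeping of the coupling: the estimation error feeds into the projected-gradient step and, conversely, $\dot{x}_i$ feeds back into the estimation dynamics, so the two error components are genuinely coupled, and only after applying Young's inequality with the right weights (the three subtracted terms in $\theta(t)$) does the single condition \eqref{C2} suffice to decouple the two contraction rates. Verifying that both the perturbation $\Phi_{\delta\varepsilon}(Y)-\Phi_{\delta\varepsilon}(x)$ and the feedback through $\dot{x}_i$ are controlled on the invariant region, and that $u_2(t)$ and $\theta(t)$ can be made to satisfy \eqref{C1}--\eqref{C2} for an admissible parameter choice, is where the real work lies.
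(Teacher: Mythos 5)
Your proposal follows essentially the same route as the paper's own proof: the same composite Lyapunov function $V=\tfrac12\|x-x_t^*\|^2+\tfrac12\sum_i\|\hat{Y}_i\|^2$, the same insertion of $\Phi_{\delta\varepsilon}(x)$ into the nonexpansiveness estimate to split tracking and consensus contributions, the same Young-inequality bookkeeping that produces $u_1(t)$, $u_2(t)$ and $\theta(t)$, the same forward-invariance-by-contradiction argument to justify the local Lipschitz constants $b_1',b_3$, and the same comparison inequality $\dot\phi\le-\tfrac{u_1\varsigma}{4}\phi+c_{px}r_2$ followed by the integrating-factor and L'Hospital argument and Lemma \ref{relationship}. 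The only minor imprecision is attributing $u_2(t)$ solely to the $\dot{x}_i$ feedback into $\dot{\hat{Y}}_i$ (it in fact arises already in the projection bound, from $\gamma^2\|\Phi_{\delta\varepsilon}(Y)-\Phi_{\delta\varepsilon}(x_t^*)\|^2$ plus the Young split of the cross term), but this does not affect the validity of the plan.
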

\color{black}
\begin{proof}
	\color{black}
	Let $z=[(x-x_t^*)^T,\hat{Y}_1^T,\cdots,\hat{Y}_N^T]^T$. For the nonautonomous system (\ref{dynamics1}), define a Lyapunov candidate function $V(t,z)=\frac{1}{2}z^Tz=\frac{1}{2}(x-x_t^*)^T(x-x_t^*)+\frac{1}{2}\sum_{i=1}^N\hat{Y}_i^T\hat{Y}_i.$ 
	
	According to Lemma  \ref{projector},
	\begin{eqnarray}
	& &||\mathcal{P}_{\Omega}[x-\gamma(t)\Phi_{\delta\varepsilon}(Y)]-x_t^*||^2 \notag\\
	&=&  ||\mathcal{P}_{\Omega}[x-\gamma(t)\Phi_{\delta\varepsilon}(Y)]-\mathcal{P}_{\Omega}[x_t^*-\gamma(t)\Phi_{\delta\varepsilon}(x_t^*)]||^2\notag\\
	&\leq&||x-x_t^*-\gamma(t) (\Phi_{\delta\varepsilon}(Y)-\Phi_{\delta\varepsilon}(x_t^*))||^2 \notag\\
	&=&||x-x_t^*||^2-2\gamma(t)(x-x_t^*)^T(\Phi_{\delta\varepsilon}(Y)-\Phi_{\delta\varepsilon}(x_t^*))\notag\\
		& &+\gamma^2(t)||\Phi_{\delta\varepsilon}(Y)-\Phi_{\delta\varepsilon}(x_t^*)||^2 \notag \\
		&=&||x-x_t^*||^2-2\gamma(t)(x-x_t^*)^T(\Phi_{\delta\varepsilon}(x)-\Phi_{\delta\varepsilon}(x_t^*))\notag\\
		& &-2\gamma(t)(x-x_t^*)^T(\Phi_{\delta\varepsilon}(Y)-\Phi_{\delta\varepsilon}(x)) \notag \\
			& &+\gamma^2(t)||\Phi_{\delta\varepsilon}(Y)-\Phi_{\delta\varepsilon}(x_t^*)||^2. \label{r12}
	\end{eqnarray}

	Let $\Delta_1=\{z\in\mathbb{R}^{N^2}|||z||
	\leq ||z(0)|| +c'\}
	$, where $c'$ is a positive constant satisfying $c_{px}c_0'\leq c'$ and $c_0'$ was defined in (\ref{b22}).	Then, there exist positive constants $b_1'$ and $b_2 $ such that for all $x\in \Delta_{11}\triangleq \{x\in\mathbb{R}^N|||x||\leq ||z(0)|| +c' +c_x \}$,
		\begin{eqnarray}
		| \nabla_{x_{i}} f_{i}(x)-\nabla_{x_{i}} f_{i}(x_t^*)|\leq b_1'||x-x_t^*||, \label{fLipschitz1}
		\end{eqnarray}
	and for all $x\in\mathbb{R}^N$,
				\begin{eqnarray}
		| \nabla_{x_{i}} P(x)-\nabla_{x_{i}} P(x_t^*)|\leq b_2||x-x_t^*||, \label{fLipschitz12}
		\end{eqnarray}
		which implies that for  all $z\in\Delta_1$,
			\begin{eqnarray}
		& &||\Phi_{\delta\varepsilon}(x)-\Phi_{\delta\varepsilon}(x_t^*)||^2 \notag \\
		&\leq& (Nb_1'^2+Nb_2^2\varepsilon^2(t)+\delta^2(t))||x-x_t^*||^2, \label{ref1}
		\end{eqnarray}	
	based on the fact that	if $z \in \Delta_{1}$,  then $x \in \Delta_{11}$.
		
	In addition,  there exists a positive constant $b_3$ such that for all $x\in \Delta_{11}$ and  $y_i\in\Delta_{12}\triangleq  \{y_i\in\mathbb{R}^N|||y_i||\leq 2(||z(0)|| +c') +c_x \}$,
	\begin{eqnarray}
	| \nabla_{x_{i}} f_{i}(x)-\nabla_{x_{i}} f_{i}(y_i)|\leq b_3 ||x-y_i||. \label{Lipschitz}
	\end{eqnarray}

		Since	$\sum_{i=1}^N||x-y_i||^2=\sum_{i=1}^N||\hat{Y}_i||^2$, it can be verified that if $z \in \Delta_{1}$,   $y_i \in \Delta_{12}$.
	
	Thus, for all $z\in\Delta_1$,
	\begin{eqnarray}
	||\Phi_{\delta\varepsilon}(Y)-\Phi_{\delta\varepsilon}(x)||^2\leq (b_3^2+b_2^2\varepsilon^2(t))\sum_{i=1}^N||\hat{Y}_i||^2.
	\end{eqnarray}

	It follows from (\ref{ref1}) that  for all $z\in\Delta_1$,
	\begin{eqnarray}
		& &||\Phi_{\delta\varepsilon}(Y)-\Phi_{\delta\varepsilon}(x_t^*)||^2\notag\\
		&\leq& (Nb_1'^2+Nb_2^2\varepsilon^2(t)+\delta^2(t))||x-x_t^*||^2\notag \\
		& &+(b_3^2+b_2^2\varepsilon^2(t))\sum_{i=1}^N||\hat{Y}_i||^2.
		\end{eqnarray}

		Thus, based on (\ref{r12}),  for all $z\in\Delta_1$,
			\begin{eqnarray}
		& &||\mathcal{P}_{\Omega}[x-\gamma(t)\Phi_{\delta\varepsilon}(Y)]-x_t^*||^2\notag\\
		&\leq& (1-u_1(t)) ||x-x_t^*||^2
		+ u_2(t)\sum_{i=1}^N||\hat{Y}_i||^2,
		\end{eqnarray}
		where $u_1(t)=-\gamma^2(t)(Nb_1'^2+Nb_2^2\varepsilon^2(t)+\delta^2(t)+1)+2\gamma(t)\delta(t)$, $u_2(t)= (\gamma^2(t)+1)(b_3^2+b_2^2\varepsilon^2(t))$. 
		
		Furthermore,  for all $z\in\Delta_1$,
			\begin{eqnarray}
		& &||\mathcal{P}_{\Omega}[x-\gamma(t)\Phi_{\delta\varepsilon}(Y)]-x||^2\notag\\
		&\leq& (2-u_1(t)) ||x-x_t^*||^2
		+ u_2(t)\sum_{i=1}^N||\hat{Y}_i||^2.
		\end{eqnarray}


Similar to the proof in Theorem \ref{theorem1}, by contradiction, suppose that there exists a time instant $T$ such that $z(t)\in\Delta_1$ for all $0\leq t<T$ and $z(T)\notin\Delta_1$ .
	
	 Taking the derivative of $V(t,z)$, we have for all $0\leq t<T$,
	\begin{align}
	\dot{V}\overset{a.e.}=&(x-x_t^*)^T(\dot{x}-\dot{x}_t^*)+\sum_{i=1}^N\hat{Y}_i^T\dot{\hat{Y}}_i\notag \\
	\overset{a.e.}=&\varsigma(t)(x-x_t^*)^T(\mathcal{P}_{\Omega}[x-\gamma(t)\Phi_{\delta\varepsilon}(Y)]-x_t^*)\notag \\ &-\varsigma(t)(x-x_t^*)^T(x-x_t^*)
	-(x-x_t^*)^T\dot{x}_t^* \notag\\
	 &-w(t)\sum_{i=1}^N\hat{Y}_i^T(L_i+B_i)\hat{Y}_i-\sum_{i=1}^N\hat{Y}_i^T\dot{x}_i\mathbf{1}_{N-1}\notag \\
	\overset{a.e.}\leq&-\frac{u_1(t)\varsigma(t)}{2}||x-x_t^*||^2-(x-x_t^*)^T\dot{x}_t^*\notag \\
	 &-(w(t)\lambda_{\text{min}}-\frac{u_2(t)\varsigma(t)}{2}\notag\\
	 &-\frac{\varsigma (t)(2-u_1(t))(N-1)^2}{u_1(t)})\sum_{i=1}^N||\hat{Y}_i||^2 \notag\\
	 &+\frac{u_1(t)\varsigma(t)}{4(2-u_1(t))}||\mathcal{P}_{\Omega}[x-\gamma(t)\Phi_{\delta\varepsilon}(Y)]-x||^2  \notag\\
	\overset{a.e.}\leq&-\frac{u_1(t)\varsigma(t)}{4}||x-x_t^*||^2-(x-x_t^*)^T\dot{x}_t^*\notag\\
	&-\theta(t)\sum_{i=1}^N||\hat{Y}_i||^2, \notag
	\end{align}
	where $\theta(t)=w(t)\lambda_{\text{min}}-\frac{u_2(t)\varsigma(t)}{2}-\frac{\varsigma (t)(2-u_1(t))(N-1)^2}{u_1(t)}-\frac{u_1(t)u_2(t)\varsigma(t)}{4(2-u_1(t))}$.
	
	Since $\theta(t)\geq \frac{1}{4}u_1(t)\varsigma(t)$, for all $0\leq t<T$,
	\begin{eqnarray}
	\dot{V}&\overset{a.e.}\leq& -\frac{u_1(t)\varsigma(t)}{2}V+\sqrt{2V}\left\Vert\dot{x}_t^*\right\Vert.
	\end{eqnarray}
	
	The following analysis is similar to that in Theorem \ref{theorem1} by defining $\phi=\sqrt{2V}=||z||$ and  $E_{u_1\varsigma}(t)\triangleq \int_{s=0}^t \frac{u_1(s)\varsigma(s)}{4}ds$. Thus, it is omitted here.
	
	As a byproduct, we can get that for all $t\geq 0$,
	\begin{eqnarray}
||z(t)||\leq e^{-E_{u_1\varsigma}(t)}(||z(0)||+c_{px}\int_{s=0}^{t} r_2(s)e^{E_{u_1\varsigma}(s)}ds), \label{lip}
	\end{eqnarray}
	which will be used in the following development.

		\color{black}
\end{proof}

\color{black}
\begin{remark} \label{rem1}
	The solutions to the conditions in Theorem \ref{t1} exist. For example,  $\varsigma(t)=(1+t)^6$, $\delta(t)=\sqrt{Nb_1'^2+1}(1+t)^{-\frac{1}{2}}$,  $\varepsilon(t)=\frac{\sqrt{Nb_1'^2+1}}{b_2}(1+t)^{1.2}$, $\gamma(t)=\frac{\delta (t)}{Nb_1'^2+Nb_2^2\varepsilon^2(t)+\delta^2(t)+1}$. The condition on $\theta(t)$ is easy to be satisfied and thus is omitted here and in the following analysis.
	\end{remark}
 
 \begin{remark}
 In this work, the "distributed" algorithm means that the players does not require  real-time strategy/state information exchanges among nonneighboring players. The common parameters can be set up before playing the game.
 \end{remark}

We can further get the following conclusion.
\begin{proposition}
	Under Assumptions \ref{fassumption}--\ref{graph} and assuming that $\nabla_{x_i}f_i(x)$ is globally Lipschitz continuous, the distributed algorithm given in (\ref{dynamics1})  asymptotically converges to the least-norm variational equilibrium of GNEP (\ref{P1}), provided that the conditions in (\ref{b12}), (\ref{C1}) and   (\ref{C2}) hold. \label{theorem3}
\end{proposition}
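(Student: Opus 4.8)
The plan is to reproduce the proof of Theorem \ref{t1} almost verbatim, using global Lipschitz continuity to dispense with both the localization (proof-by-contradiction) argument and condition (\ref{b22}), exactly as Proposition \ref{theorem2} does relative to Theorem \ref{theorem1}. The first step is to observe that when $\nabla_{x_i} f_i$ is globally Lipschitz, the constants $b_1'$ and $b_3$ in (\ref{fLipschitz1}) and (\ref{Lipschitz}) may be taken as genuine global Lipschitz constants; together with the bound (\ref{fLipschitz12}) on $\nabla_{x_i} P$ (already valid on all of $\mathbb{R}^N$ because the constraints are affine), the contraction estimate
\[
\|\mathcal{P}_{\Omega}[x-\gamma(t)\Phi_{\delta\varepsilon}(Y)]-x_t^*\|^2 \leq (1-u_1(t))\|x-x_t^*\|^2 + u_2(t)\sum_{i=1}^N\|\hat{Y}_i\|^2
\]
then holds for every $z\in\mathbb{R}^{N^2}$, with $u_1,u_2$ as in Theorem \ref{t1}, rather than only on the sets $\Delta_{11},\Delta_{12}$. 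In particular there is no need to confine the trajectory to $\Delta_1$, so the entire contradiction argument built around $\Delta_1$ and the constant $c_0'$ becomes superfluous.

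Second, I would take the same Lyapunov function $V(t,z)=\frac12 z^Tz$ and repeat the derivative computation of Theorem \ref{t1}. Using condition (\ref{C2}), i.e. $\theta(t)\geq\frac14 u_1(t)\varsigma(t)$, the dissipation $-w(t)\sum_i \hat{Y}_i^T(L_i+B_i)\hat{Y}_i$ dominates the coupling terms, and the parameter limits collected in (\ref{C1}) are precisely the hypotheses under which Lemma \ref{relationship} applies and Lemma \ref{xbound} yields $\|\dot{x}_t^*\|\leq c_{px} r_2(t)$ with $c_{px}=\min\{c_p,c_x\}$. Because the contraction estimate is now valid globally, the resulting differential inequality $\dot{V}\overset{a.e.}\leq -\frac{u_1(t)\varsigma(t)}{2}V+\sqrt{2V}\,\|\dot{x}_t^*\|$ holds for all $t\geq 0$, not merely on an a priori interval $[0,T)$. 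Setting $\phi=\sqrt{2V}=\|z\|$ and multiplying by the integrating factor $e^{E_{u_1\varsigma}(t)}$ yields, after integration, the bound (\ref{lip}) for every $t\geq 0$.

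Finally, I would close the argument with condition (\ref{b12}) in place of (\ref{b22}). Since $\lim_{t\to\infty}\int_{s=0}^{t} u_1(s)\varsigma(s)\,ds=\infty$ we have $e^{-E_{u_1\varsigma}(t)}\to 0$, and the dichotomy in (\ref{b12}) gives, by the same L'Hospital computation as in (\ref{lim0}), that $\frac{\int_{s=0}^{t} r_2(s)e^{E_{u_1\varsigma}(s)}ds}{e^{E_{u_1\varsigma}(t)}}\to 0$; this single limit simultaneously supplies the uniform boundedness that (\ref{b22}) previously guaranteed and forces the right-hand side of (\ref{lip}) to zero. Hence $\phi(t)\to 0$, so $\|x-x_t^*\|\to 0$ and $\|\hat{Y}_i\|\to 0$, and Lemma \ref{relationship} then yields $\lim_{t\to\infty}x(t)=x^*$, the least-norm variational equilibrium. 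The only genuinely substantive point, and thus the main obstacle, is the first one: verifying that global Lipschitz continuity really does make the contraction estimate hold on all of $\mathbb{R}^{N^2}$, so that both the trajectory confinement and condition (\ref{b22}) can be removed; once this is established, the remainder is a direct transcription of the proof of Theorem \ref{t1}.
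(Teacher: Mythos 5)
Your proposal is correct and follows exactly the route the paper intends: the paper leaves the proof of Proposition \ref{theorem3} implicit, mirroring its proof of Proposition \ref{theorem2}, which likewise notes that global Lipschitz continuity makes the contraction estimate (and hence the differential inequality for $\phi=\sqrt{2V}$) valid without the confinement-to-$\Delta_1$ contradiction argument, and that the uniform boundedness previously supplied by (\ref{b22}) follows automatically from $\lim_{t\rightarrow\infty}\frac{\int_{s=0}^{t} r_2(s)e^{E_{u_1\varsigma}(s)}ds}{e^{E_{u_1\varsigma}(t)}}=0$ under (\ref{b12}). Your elaboration of these two points is exactly the argument the authors compress into their one-line proofs.
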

\color{black}

\color{black}
\begin{remark}
	It can be verified that  $\varsigma(t)=(1+t)^6$, $\delta(t)= (1+t)^{-\frac{1}{2}}$,  $\varepsilon(t)= (1+t)^{1.2}$, $\gamma(t)=\frac{\delta (t)}{Nb_1'^2+Nb_2^2\varepsilon^2(t)+\delta^2(t)+1}$. satisfy the conditions in  Proposition \ref{theorem3}.
	\end{remark}


\subsection{Convergence Rate Analysis}

In this section, we investigate the convergence rate of the algorithm in (\ref{dynamics1}). Note that the convergence of the algorithm relies on two parts: the convergence of the state $x$ to the trajectory $x_t^*$ and the convergence of the trajectory  $x_t^*$  to the least-norm solution $x^*$. The former one can be described by  (\ref{lip}) and the latter one is generally difficult to be obtained in a regularized algorithm. Nevertheless, according to (\ref{lim12}), we can get the following conclusion.
 		
		\begin{proposition}
			Suppose that the conditions in Theorem \ref{t1} hold. Then,  
					\begin{eqnarray} 
||x-x_t^*||\leq e^{-E_{u_1\varsigma}(t)}(||z(0)||\notag \\
+c_{px}\int_{s=0}^{t} r_2(s)e^{E_{u_1\varsigma}(s)}ds). \label{s}
			\end{eqnarray} 
			Furthermore, if $||x^*||\neq 0$,
			\begin{eqnarray} 
			||x||-||x^*||&\leq \frac{C_3}{||x^*||\delta(t)}\sqrt{\frac{C_2}{\varepsilon(t)}}+e^{-E_{u_1\varsigma}(t)}(||z(0)|| \notag \\
			&+c_{px}\int_{s=0}^{t} r_2(s)e^{E_{u_1\varsigma}(s)}ds). \label{c1}
			\end{eqnarray} 
			If  $||x^*||=0$,
		    \begin{eqnarray} 
			||x|| &\leq (\frac{C_3}{\delta(t)}\sqrt{\frac{C_2}{\varepsilon(t)}})^{\frac{1}{2}}+e^{-E_{u_1\varsigma}(t)}(||z(0)|| \notag \\
			&+c_{px}\int_{s=0}^{t} r_2(s)e^{E_{u_1\varsigma}(s)}ds), \label{c2}
			\end{eqnarray} 
			where $C_2>0$ and $C_3>0$ were defined in Lemma \ref{relationship}, and $c_{px}=\min\{c_{p}, c_x\}$ with $c_{p}$ and $c_x$ being defined in Lemma \ref{xbound}.
	\end{proposition}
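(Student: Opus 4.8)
The plan is to combine the dynamic tracking bound produced as a byproduct of Theorem~\ref{t1} with the stationary estimate obtained inside the proof of Lemma~\ref{relationship}. First I would observe that the first inequality (\ref{s}) is essentially immediate: since $z=[(x-x_t^*)^T,\hat{Y}_1^T,\cdots,\hat{Y}_N^T]^T$, we have $\|x-x_t^*\|\leq\|z(t)\|$, so (\ref{s}) follows at once by discarding the consensus-error components $\hat{Y}_i$ from the bound (\ref{lip}) already established for $\|z(t)\|$ under the conditions of Theorem~\ref{t1}.

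For the remaining two inequalities, the key device is the triangle inequality $\|x\|\leq\|x-x_t^*\|+\|x_t^*\|$, which gives $\|x\|-\|x^*\|\leq\|x-x_t^*\|+(\|x_t^*\|-\|x^*\|)$. The first term is already controlled by (\ref{s}), so the remaining task is to estimate how far $\|x_t^*\|$ is from $\|x^*\|$. Here I would invoke the estimate (\ref{lim12}), namely $\|x_t^*\|^2-\|x^*\|^2\leq\frac{C_3}{\delta(t)}\sqrt{\frac{C_2}{\varepsilon(t)}}$, which is available because the conditions of Theorem~\ref{t1} (in particular $\lim_{t\to\infty}\delta(t)=0$, $\lim_{t\to\infty}\varepsilon(t)=\infty$, and $\lim_{t\to\infty}\delta^2(t)\varepsilon(t)=\infty$ in (\ref{C1})) subsume the hypotheses of Lemma~\ref{relationship}.

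When $\|x^*\|\neq0$, I would factor $\|x_t^*\|^2-\|x^*\|^2=(\|x_t^*\|-\|x^*\|)(\|x_t^*\|+\|x^*\|)$. If $\|x_t^*\|\leq\|x^*\|$ then $\|x_t^*\|-\|x^*\|$ is nonpositive and so is dominated by the positive right-hand side for free; otherwise the numerator is positive and $\|x_t^*\|+\|x^*\|\geq\|x^*\|>0$, so dividing yields $\|x_t^*\|-\|x^*\|\leq\frac{C_3}{\|x^*\|\delta(t)}\sqrt{\frac{C_2}{\varepsilon(t)}}$. Substituting this and (\ref{s}) into the triangle inequality gives (\ref{c1}). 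When $\|x^*\|=0$ the factorization degenerates, so I would instead read (\ref{lim12}) directly as $\|x_t^*\|^2\leq\frac{C_3}{\delta(t)}\sqrt{\frac{C_2}{\varepsilon(t)}}$, take square roots to obtain $\|x_t^*\|\leq\bigl(\frac{C_3}{\delta(t)}\sqrt{\frac{C_2}{\varepsilon(t)}}\bigr)^{1/2}$, and combine with (\ref{s}) via the triangle inequality to get (\ref{c2}).

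The argument is essentially bookkeeping, with no genuine analytic obstacle, since the two substantive ingredients — the dynamic tracking bound (\ref{lip}) and the stationary regularization estimate (\ref{lim12}) — are both inherited from earlier results. The only point that demands slight care is the sign handling in the $\|x^*\|\neq0$ case: one must isolate the sub-case $\|x_t^*\|\leq\|x^*\|$ so that the division by $\|x_t^*\|+\|x^*\|$ is both legitimate and sign-correct, and one must note that the right-hand side of (\ref{lim12}) is automatically nonnegative, which renders that sub-case trivial.
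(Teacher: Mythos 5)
Your proposal is correct and follows essentially the same route as the paper: (\ref{s}) is read off from (\ref{lip}) by discarding the $\hat{Y}_i$ components, (\ref{c1}) comes from dividing (\ref{lim12}) by $\|x_t^*\|+\|x^*\|$ (bounded below by $\|x^*\|$) and adding (\ref{s}) via the triangle inequality, and (\ref{c2}) comes from taking the square root of (\ref{lim12}) and adding (\ref{s}). The only difference is that you spell out the sign bookkeeping (the sub-case $\|x_t^*\|\leq\|x^*\|$) that the paper's one-line proof leaves implicit, which is a harmless refinement rather than a different argument.
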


\begin{proof}
 (\ref{s}) is a direct result of (\ref{lip}). (\ref{c1}) is obtained by dividing (\ref{lim12}) by $||x_t^*||+||x^*||$ and adding  (\ref{s}).   (\ref{c2}) is obtained  by taking the square root of (\ref{lim12}) and adding  (\ref{s}). 
	\end{proof}

\color{black}

			
				\color{black}
				
				\subsection{The Unconstrained Monotone Game}
				
				In the above sections, we design a distributed algorithm for monotone generalized noncooperative games with set constraints and shared affine  inequality constraints. In this session, we study the unconstrained monotone game described as follows:
				\begin{equation}
					\min_{x_{i}\in\mathbb{R}}\text{ }f_{i}(x_{i},x_{-i}). \label{P3}%
				\end{equation}
	
		 Note that it cannot be viewed as a special case of  GNEP (\ref{P1}) since the boundedness assumption of the strategy space is not satisfied. 
		 
		 We consider the following distributed updating law
		 \begin{align}
		 	&\dot{x}_i=-(\nabla_{x_{i}} f_{i}(y_i)+\delta(t)x_i), \notag \\
		 	&\dot{y}_{ij}=-w(t)\sum_{k=1}^{N}a_{ik}(y_{ij}-y_{kj}),j\in\mathcal{V}\backslash i, \label{dynamics12}
		 \end{align}
		 where $y_{ij}\in\mathbb{R}$,  $y_{ii}=x_i$, $y_i=[y_{ij}]_{j\in\mathcal{V}}$, $\delta(t)>0$ and $ w(t)>0$ are time-varying, positive and smooth parameters with bounded initial values.
				
				In this section, we assume that Assumptions	\ref{fassumption}-\ref{monot} and \ref{graph} hold. In addition, the following assumption is required to guarantee the existence of a Nash equilibrium of the game.
				
				\color{black}
				\begin{assumption} 
					The Nash equilibrium set $\mathcal{S}$ of  game (\ref{P3}) is nonempty. \label{nonempty}
				\end{assumption}
			
			At each $t\geq 0$, define the following unconstrained game
			\begin{equation}
				\min_{x_{i}}\text{ }f_{i}(x_{i},x_{-i})+\frac{\delta(t)}{2}x_i^2. \label{P2}%
			\end{equation}
			
			According to Assumption \ref{monot}, the regularized map $\Phi_{\delta}(x)= F(x)+\delta(t)x$ is strongly monotone at each $t\geq 0$. According to Theorem 3 of \cite{scutari2014real}, the unconstrained game in (\ref{P2}) has a unique Nash equilibrium at each $t\geq 0$, denoted by $x_t^*$. 
			
			Let $x^*$ be an element in $\mathcal{S}$ with the least norm. The following lemma shows the relationship between $x_t^*$ and $x^*$.
			\begin{lemma} 
				Suppose that  Assumptions	\ref{fassumption}--\ref{monot} and \ref{nonempty} hold  and $\lim_{t\rightarrow\infty} \delta(t)=0$. Then, $\lim_{t\rightarrow\infty}x_t^*=x^*$ and $||x_t^*||\leq||x^*||$.  \label{tends}
			\end{lemma}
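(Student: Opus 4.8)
The plan is to recognize Lemma \ref{tends} as a standard Tikhonov regularization of the monotone equation $F(x)=0$, exploiting that both games \eqref{P3} and \eqref{P2} are unconstrained and convex. First I would write down the first-order optimality conditions. Since $f_i(x_i,x_{-i})+\frac{\delta(t)}{2}x_i^2$ is strongly convex in $x_i$ and the strategy set is all of $\mathbb{R}$, the unique Nash equilibrium $x_t^*$ of \eqref{P2} is characterized by interior stationarity of each player's cost, which upon stacking gives $\Phi_{\delta}(x_t^*)=F(x_t^*)+\delta(t)x_t^*=0$, i.e. $F(x_t^*)=-\delta(t)x_t^*$. Likewise, every $x\in\mathcal{S}$ satisfies $F(x)=0$, so $\mathcal{S}=\{x\in\mathbb{R}^N\mid F(x)=0\}$. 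I would then record that $\mathcal{S}$ is closed by continuity of $F$ and convex by a Minty-type argument (the zero set coincides with $\{x\mid (y-x)^TF(y)\geq 0,\ \forall y\in\mathbb{R}^N\}$, an intersection of half-spaces), so the least-norm element $x^*$ is the unique projection of the origin onto $\mathcal{S}$ and the statement is well posed.

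Second, I would establish the norm bound $\|x_t^*\|\leq\|x^*\|$ directly from monotonicity. Substituting $F(x^*)=0$ and $F(x_t^*)=-\delta(t)x_t^*$ into $(F(x_t^*)-F(x^*))^T(x_t^*-x^*)\geq 0$ and dividing by $-\delta(t)<0$ yields $(x_t^*)^T(x_t^*-x^*)\leq 0$, hence $\|x_t^*\|^2\leq (x_t^*)^Tx^*\leq\|x_t^*\|\,\|x^*\|$, which gives $\|x_t^*\|\leq\|x^*\|$ (trivially when $\|x_t^*\|=0$). This simultaneously proves the second claim of the lemma and shows the family $\{x_t^*\}_{t\geq 0}$ is uniformly bounded.

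Third, for the convergence claim I would use compactness. Any sequence $t_k\to\infty$ admits, by boundedness, a subsequence with $x_{t_{k_s}}^*\to\bar{x}$. Because $F(x_t^*)=-\delta(t)x_t^*$ with $\delta(t)\to 0$ and $x_t^*$ bounded, $F(x_t^*)\to 0$, so continuity of $F$ forces $F(\bar{x})=0$, i.e. $\bar{x}\in\mathcal{S}$. Passing to the limit in the norm bound gives $\|\bar{x}\|\leq\|x^*\|$, and the least-norm property of $x^*$ then forces $\|\bar{x}\|=\|x^*\|$; by uniqueness of the least-norm element, $\bar{x}=x^*$. Since every subsequential limit equals $x^*$ and the family is bounded, I conclude $\lim_{t\to\infty}x_t^*=x^*$.

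The main obstacle I anticipate is justifying that the least-norm element is well defined and unique, i.e. the closedness and especially the convexity of $\mathcal{S}$, since the Minty equivalence recorded earlier in Lemma \ref{r4} was stated only for a compact $Q$, whereas here $Q=\mathbb{R}^N$. I would therefore invoke the unconstrained monotone version of Minty's lemma (or argue convexity of the zero set of a continuous monotone map directly via the dual characterization). The remaining mild point is confirming that the regularized equilibrium satisfies the full stationarity condition $\Phi_{\delta}(x_t^*)=0$ rather than a projected one; this is immediate because each strategy set equals $\mathbb{R}$, so no boundary effects arise and the argument reduces cleanly to the Tikhonov scheme above.
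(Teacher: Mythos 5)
Your proof is correct, and for the key convergence step it takes a genuinely different route from the paper. Both arguments share the same skeleton: the first-order conditions give $F(x_t^*)=-\delta(t)x_t^*$ and $\mathcal{S}=\{x\in\mathbb{R}^N\mid F(x)=0\}$, monotonicity tested against $F(x^*)=0$ gives $\|x_t^*\|\le\|x^*\|$, and continuity of $F$ together with $\delta(t)x_t^*\to 0$ shows that any limit point of $x_t^*$ lies in $\mathcal{S}$. The paper (following Lemma 1 of \cite{bruck1974strongly}, cf.\ Lemma 4 of \cite{cominetti2008strong}) then obtains the \emph{existence} of $\lim_{t\to\infty}x_t^*$ by a second monotonicity argument: comparing two regularization levels $0<\vartheta(t)<\delta(t)$, it shows the norm of the regularized solution is nonincreasing in the regularization parameter, so the norms converge by monotone convergence (and, in the full version of Bruck's argument, the family is Cauchy); the limit is then identified as a least-norm point of $\mathcal{S}$. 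You instead invoke compactness of the bounded family in $\mathbb{R}^N$ and show that every accumulation point must equal the least-norm element, which forces convergence. Your route requires closedness and convexity of $\mathcal{S}$, hence uniqueness of its least-norm element, and you correctly flag and fill this gap via the unconstrained Minty equivalence (the paper's Lemma \ref{r4} is indeed stated only for compact $Q$, but the equivalence needs only continuity, monotonicity and convexity of the domain, not boundedness). Notably, the paper relies on this same uniqueness implicitly when it concludes $\bar{x}=x^*$ from $\bar{x}\in\mathcal{S}$ and $\|\bar{x}\|\le\|x^*\|$, yet never establishes convexity of $\mathcal{S}$; your write-up makes that hypothesis explicit. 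What each approach buys: yours is more elementary and self-contained in the finite-dimensional setting of this paper; the paper's Bruck-style argument does not need uniqueness to get convergence (the limit is automatically a least-norm point) and, unlike a compactness argument, extends to infinite-dimensional Hilbert spaces, where bounded sequences are only weakly compact and the monotonicity trick is what delivers strong convergence.
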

			\begin{proof}
				The proof follows Lemma 1 of \cite{bruck1974strongly} (or Lemma 4 of \cite{cominetti2008strong}), where we made some minor revisions.
				
				 According to the monotonicity, $F\left(x_{t}^{*}\right)^{T}\left(x_{t}^{*}-x^{*}\right) \geq 0$ at any $t \geq 0,$ which implies that $\left(-\delta(t) x_{t}^{*}\right)^{T}\left(x_{t}^{*}-x^{*}\right) \geq 0 .$ Thus, $\left\Vert {}x_{t}^{*}-x^{*}\right\Vert  ^{2} \leq\left\Vert  x^{*}\right\Vert  ^{2}-\left\Vert {}x_{t}^{*}\right\Vert  ^{2}$
				and $\left\Vert {}x_{t}^{*}\right\Vert   \leq\left\Vert {}x^{*}\right\Vert  $.
				
				Let $0<\vartheta(t)<\delta(t).$ Then, $(F\left(\tilde{x}_{t}^{*}\right)-F\left(x_{t}^{*}\right))^{T}(\tilde{x}_{t}^{*} -x_{t}^{*}) \geq$
				$0,$ where $\tilde{x}_{t}^{*}$ is the unique solution to $0=F(x)+\vartheta(t) x$ Thus, $\left(\delta(t) x_{t}^{*}-\vartheta(t) \tilde{x}_{t}^{*}\right)^{T}\left(\tilde{x}_{t}^{*}-x_{t}^{*}\right) \geq 0,$ which implies that
				$(\vartheta(t)-\delta(t)) x_{t}^{* T}\left(x_{t}^{*}-\tilde{x}_{t}^{*}\right) \geq\left\Vert {}x_{t}^{*}-\tilde{x}_{t}^{*}\right\Vert  ^{2} \geq 0 .$ Furthermore,
				$x_{t}^{* T}\left(x_{t}^{*}-\tilde{x}_{t}^{*}\right) \leq 0$ and it follows that $\left\Vert {}x_{t}^{*}\right\Vert  \leq\left\Vert {}\tilde{x}_{t}^{*}\right\Vert.$ Thus, $x_{t}^{*}$ is
				monotonically nondecreasing. According to the boundedness and the monotone convergence theorem, $\lim _{t \rightarrow \infty} x_{t}^{*}$ exists.
				
				Let $\bar{x}=\lim _{t \rightarrow \infty} x_{t}^{*}$. Since $\lim _{t \rightarrow \infty} \delta(t)=0$ and
				$\lim_{t \rightarrow \infty} \delta(t) x_{t}^{*}=0$,
				$\lim_{t \rightarrow \infty} F(x_{t}^{*})=0$. According to Assumption 1 and the first-order condition, $\mathcal{S} =\{x \in\mathbb{R}^{N} | F(x)=0 \}.$ Then, according to the closeness of $F(x),$ we have $F(\bar{x})=\lim _{t \rightarrow \infty} F(x_{t}^{*})=0,$ i.e., $\bar{x} \in \mathcal{S} .$ Since
				$\left\Vert  x_{t}^{*}\right\Vert   \leq\left\Vert  x^{*}\right\Vert ,$ we have $\left\Vert \bar{x}\right\Vert \leq\left\Vert  x^{*}\right\Vert  .$ Thus, $\bar{x}=x^{*}$ and $\lim _{t \rightarrow \infty} x_{t}^{*}=x^{*}.$ The convergence is strong since the considered space is finite-dimensional.
			\end{proof}
			
			\begin{lemma}
			 Under the conditions in Lemma \ref{tends}, (a) $\dot{x}_t^*$ exists almost everywhere; (b)	$||\dot{x}_t^*||\leq \frac{c_x|\dot{\delta}(t)|}{\delta(t)}$. \label{xbound2}
			\end{lemma}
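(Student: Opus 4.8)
The plan is to mirror the argument of Lemma \ref{xbound}, exploiting that in the unconstrained regularized game (\ref{P2}) the equilibrium $x_t^*$ is characterized by the first-order stationarity condition $F(x_t^*)+\delta(t)x_t^*=0$ rather than by a variational inequality. Since $\Phi_\delta=F+\delta\,\mathrm{Id}$ is strongly monotone, this equation has a unique solution at each $t$, and I would extract both a Lipschitz estimate and the derivative bound by differencing this identity at two parameter values and invoking monotonicity of $F$.

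For part (a), I would fix $\delta_1,\delta_2>0$ with corresponding solutions $x_{\delta_1}^*,x_{\delta_2}^*$ satisfying $F(x_{\delta_j}^*)+\delta_j x_{\delta_j}^*=0$. Taking the inner product of the first identity with $x_{\delta_2}^*-x_{\delta_1}^*$ and of the second with $x_{\delta_1}^*-x_{\delta_2}^*$, then adding, the $F$-terms combine into $(x_{\delta_2}^*-x_{\delta_1}^*)^T(F(x_{\delta_1}^*)-F(x_{\delta_2}^*))\le 0$ by monotonicity, leaving $(x_{\delta_2}^*-x_{\delta_1}^*)^T(\delta_1 x_{\delta_1}^*-\delta_2 x_{\delta_2}^*)\ge 0$. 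Writing $\delta_1 x_{\delta_1}^*-\delta_2 x_{\delta_2}^*=\delta_2(x_{\delta_1}^*-x_{\delta_2}^*)+(\delta_1-\delta_2)x_{\delta_1}^*$ and applying the Cauchy--Schwarz inequality yields the local Lipschitz estimate $\|x_{\delta_2}^*-x_{\delta_1}^*\|\le \frac{\|x_{\delta_1}^*\|}{\delta_2}\,|\delta_1-\delta_2|$. Because $\delta(\cdot)$ is smooth and hence locally Lipschitz in $t$, the composition $t\mapsto x_t^*$ is locally Lipschitz, so $\dot{x}_t^*$ exists almost everywhere.

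For part (b), I would set $t_1=t_2+h$ and difference the stationarity conditions at $t_1$ and $t_2$ exactly as above, obtaining $(x_{t_2}^*-x_{t_1}^*)^T(\delta(t_1)x_{t_1}^*-\delta(t_2)x_{t_2}^*)\ge 0$. The same splitting and Cauchy--Schwarz give $\delta(t_2)\,\|x_{t_2}^*-x_{t_1}^*\|\le |\delta(t_1)-\delta(t_2)|\,\|x_{t_1}^*\|$. Dividing by $h\,\delta(t_2)$ and letting $h\to 0$ produces $\|\dot{x}_t^*\|\le \frac{c_x\,|\dot{\delta}(t)|}{\delta(t)}$, where $c_x$ is any uniform bound on $\|x_t^*\|$.

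The only substantive departure from Lemma \ref{xbound} --- and the point requiring care --- is the source of the uniform bound $c_x$ on $\|x_t^*\|$. In the constrained setting this came from compactness of $\Omega$, but here the strategy space is all of $\mathbb{R}^N$; instead I would invoke Lemma \ref{tends}, which guarantees $\|x_t^*\|\le\|x^*\|$ for every $t$, so one may take $c_x=\|x^*\|$, consistent with the constant $c_x$ appearing in the statement. Everything else is a direct, monotonicity-driven adaptation with no genuine obstacle.
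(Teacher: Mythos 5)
Your proof is correct and is exactly the specialization that the paper itself invokes: the paper omits the argument, stating only that the result is a special case of Lemma \ref{xbound} (with the VI replaced by the stationarity condition $F(x_t^*)+\delta(t)x_t^*=0$, which is what the VI reduces to when $\Omega=\mathbb{R}^N$). You also correctly identify and resolve the one point where the specialization is not literally immediate — the uniform bound $c_x$ on $\|x_t^*\|$ can no longer come from compactness of $\Omega$ and must instead be taken from Lemma \ref{tends}, which gives $\|x_t^*\|\leq\|x^*\|$ — a detail the paper glosses over.
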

			
			\begin{proof}
				The proof is given in \cite{attouch1996dynamical} and \cite{boct2020inducing}. It can be viewed as a special case of Lemma \ref{xbound} and thus is omitted.
			\end{proof}
			
			The main result of this section can be described as follows:
			
			\color{black}
			\begin{theorem}
				Under Assumptions \ref{fassumption}--\ref{monot} and \ref{graph}--\ref{nonempty}, the algorithm given in (\ref{dynamics12})  asymptotically converges to the least-norm variational equilibrium in $\mathcal{S}$, provided that the following conditions hold:
				\begin{eqnarray}
				 \lim_{t\rightarrow\infty} \delta(t)=0, 
					 \lim_{t\rightarrow\infty}\int_{s=0}^t\delta(s)ds=\infty, 
					\lim_{t\rightarrow\infty} \frac{|\dot{\delta}(t)|}{\delta^2(t)}=0,  \notag\\ 
					\int_{s=0}^t\frac{|\dot{\delta}(s)|}{\delta(s)}e^{\int_{p=0}^s\delta(p)dp}ds \text{ is bounded or } \notag \\
					\lim_{t\rightarrow\infty}\int_{s=0}^t\frac{|\dot{\delta}(s)|}{\delta(s)}e^{\int_{p=0}^s\delta(p)dp}ds=\infty; \\
					w(t)\lambda_{\text{min}}-\frac{l_1^2c_1}{2\delta(t)}-l_1\sqrt{N-1} -\frac{(N-1)c_2(l_2+\delta(t))^2}{2\delta(t)} \notag \\
					\geq\delta(t)(1-\frac{N}{2c_1}-\frac{N}{2c_2})> 0,
				\end{eqnarray}
				where $c_1$ and $c_2$ are arbitrary positive constants, $l_1$ and $l_2$ are some local Lipschitz constants, and $\lambda_{\min}$ is the minimum value among the eigenvalues of $L_i+B_i$, $i\in\mathcal{V}$. \label{t2}
			\end{theorem}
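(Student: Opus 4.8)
The plan is to adapt the Lyapunov argument of Theorem \ref{t1} to the unconstrained setting of (\ref{dynamics12}). First I would set $q=x-x_t^*$ and $\hat{Y}_i=\bar{Y}_i-x_i\mathbf{1}_{N-1}$ exactly as before, and choose the candidate $V(t,z)=\frac{1}{2}\|q\|^2+\frac{1}{2}\sum_{i=1}^N\|\hat{Y}_i\|^2$ with $z=[q^T,\hat{Y}_1^T,\dots,\hat{Y}_N^T]^T$. The crucial structural difference is that (\ref{dynamics12}) has no projection and no penalty term, so the $x$-dynamics are simply $\dot{x}_i=-(\nabla_{x_i}f_i(y_i)+\delta(t)x_i)$. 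Differentiating $V$ along trajectories, the consensus block $-w(t)\sum_i\hat{Y}_i^T(L_i+B_i)\hat{Y}_i$ contributes $-w(t)\lambda_{\min}\sum_i\|\hat{Y}_i\|^2$ as in Theorem \ref{t1}. For the $q$-block I would expand $q^T(\dot{x}-\dot{x}_t^*)$ and write $\nabla_{x_i}f_i(y_i)-\nabla_{x_i}f_i(x)+\nabla_{x_i}f_i(x)-\nabla_{x_i}f_i(x_t^*)$, using strong monotonicity of $\Phi_\delta=F+\delta(t)x$ to extract a good $-\delta(t)\|q\|^2$ term, the local Lipschitz constant $l_1$ to bound the $F(x)-F(x_t^*)$ cross terms, and $l_2$ together with the graph-consensus error $\|\hat{Y}_i\|$ to bound the $F(y_i)-F(x)$ mismatch.

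Next I would use Young's inequality (with the free constants $c_1,c_2$ appearing in the hypothesis) to split the indefinite cross terms between the $\|q\|^2$ reservoir and the $\sum_i\|\hat{Y}_i\|^2$ reservoir. The coefficient conditions in the theorem, namely
\begin{equation}
w(t)\lambda_{\min}-\frac{l_1^2c_1}{2\delta(t)}-l_1\sqrt{N-1}-\frac{(N-1)c_2(l_2+\delta(t))^2}{2\delta(t)}\geq\delta(t)\Bigl(1-\frac{N}{2c_1}-\frac{N}{2c_2}\Bigr)>0, \notag
\end{equation}
are engineered precisely so that after this Young-splitting the aggregate consensus coefficient stays positive and the remaining $\|q\|^2$ coefficient is a clean multiple of $\delta(t)$. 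This should yield a dissipation estimate of the form $\dot V\overset{a.e.}\leq -\delta(t)V+\sqrt{2V}\|\dot{x}_t^*\|$ on an a priori bounded invariant set, paralleling the inequality reached in Theorem \ref{t1} but with $\delta(t)$ playing the role formerly played by $\tfrac{1}{2}u_1(t)\varsigma(t)$.

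From there the argument is a direct transcription of the second half of Theorem \ref{theorem1}. Setting $\phi=\sqrt{2V}=\|z\|$ gives $\dot\phi\overset{a.e.}\leq -\frac{\delta(t)}{2}\phi+\|\dot{x}_t^*\|$, and by Lemma \ref{xbound2} we have $\|\dot{x}_t^*\|\leq c_x|\dot{\delta}(t)|/\delta(t)$. Introducing the integrating factor $e^{\frac{1}{2}\int_0^t\delta(s)ds}$ and integrating, the hypotheses $\lim_{t\to\infty}\int_0^t\delta(s)ds=\infty$, $\lim_{t\to\infty}|\dot{\delta}(t)|/\delta^2(t)=0$, and the boundedness/divergence dichotomy on $\int_0^t\frac{|\dot{\delta}(s)|}{\delta(s)}e^{\int_0^s\delta(p)dp}ds$ together with L'Hospital's rule force $\lim_{t\to\infty}\phi(t)=0$, hence $\lim_{t\to\infty}\|x-x_t^*\|=0$. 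Combining with Lemma \ref{tends}, which gives $\lim_{t\to\infty}x_t^*=x^*$, yields $\lim_{t\to\infty}x(t)=x^*$, the least-norm element of $\mathcal{S}$.

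The main obstacle I anticipate is not the final integration but the invariance and boundedness step. Because the strategy space is now all of $\mathbb{R}$ rather than a compact $\Omega$, the local Lipschitz constants $l_1,l_2$ are only valid on a bounded region, so I must first establish, by the same contradiction-on-a-first-exit-time device used in Theorem \ref{t1}, that $z(t)$ stays in a ball $\Delta$ determined by $\|z(0)\|$ and the bound on $\int_0^t\frac{|\dot{\delta}(s)|}{\delta(s)}e^{\int_0^s\delta(p)dp}ds/e^{\frac12\int_0^t\delta(s)ds}$. Making this self-consistent, so that the constants $l_1,l_2$ chosen on $\Delta$ in turn certify that trajectories never leave $\Delta$, is the delicate part; everything downstream is the routine Grönwall-type estimate already carried out for the constrained case.
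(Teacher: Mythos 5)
Your proposal is correct and takes essentially the same route the paper intends: the paper's own proof of Theorem \ref{t2} is in fact omitted, stating only that ``the idea of the proof is similar to that in Theorem \ref{t1},'' and your reconstruction --- the Lyapunov function $V=\frac{1}{2}\|x-x_t^*\|^2+\frac{1}{2}\sum_i\|\hat{Y}_i\|^2$, the strong-monotonicity dissipation $-\delta(t)\|q\|^2$, Young splitting with the free constants $c_1,c_2$, the integrating factor $e^{\int_0^t\delta(s)ds}$ combined with Lemmas \ref{tends} and \ref{xbound2}, and the first-exit-time contradiction to keep $z(t)$ in a ball where the local Lipschitz constants $l_1,l_2$ are valid --- is precisely that adaptation. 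Your closing observation that the self-consistent choice of the invariant ball is the genuinely delicate step (since, unlike Theorem \ref{t1}, there is no compact $\Omega$ or projection to confine $x$) is accurate and is exactly the point the paper glosses over by omitting the proof.
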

		
		\begin{proof}
			The idea of the proof is similar to that in Theorem  \ref{t1}, and is thus omitted.
			\end{proof}

	\section{Simulation}\label{S5}

	\subsection{Algorithm Verification} \label{S6}
	
	To verify the effectiveness of the proposed algorithm, we consider a 5-person noncooperative game with the following objective functions: 
	\begin{eqnarray}
	&&f_1(x)=x_1^2-x_1x_2-x_1x_5, \notag\\
    &&f_2(x)=1.5x_2^2-x_1x_2-x_2x_3-x_2x_4, \notag\\
     &&f_3(x)=0.5x_3^2-x_2x_3, \notag\\
     &&f_4(x)=0.5x_4^2-x_2x_4, \notag\\
     &&f_5(x)=0.5x_5^2-x_1x_5. 
	\end{eqnarray}
	
  	Each player $i$ is subject to a local constraint  $\Omega_i=\{x_i\in\mathbb{R} | -i\leq x_i\leq i\}$ and a shared constraint $g(x)=x_1+x_2+x_3+x_4+x_5+1\leq 0$. 
  	The communication graph for the 5 players is shown in Fig. \ref{fig:graph}.

It can be calculated that the least-norm variational equilibrium of the game is $[-0.2,-0.2,-0.2,-0.2,-0.2]^T$.
	
	\begin{figure} [htp]
		\centering
		\includegraphics[width=0.8\linewidth]{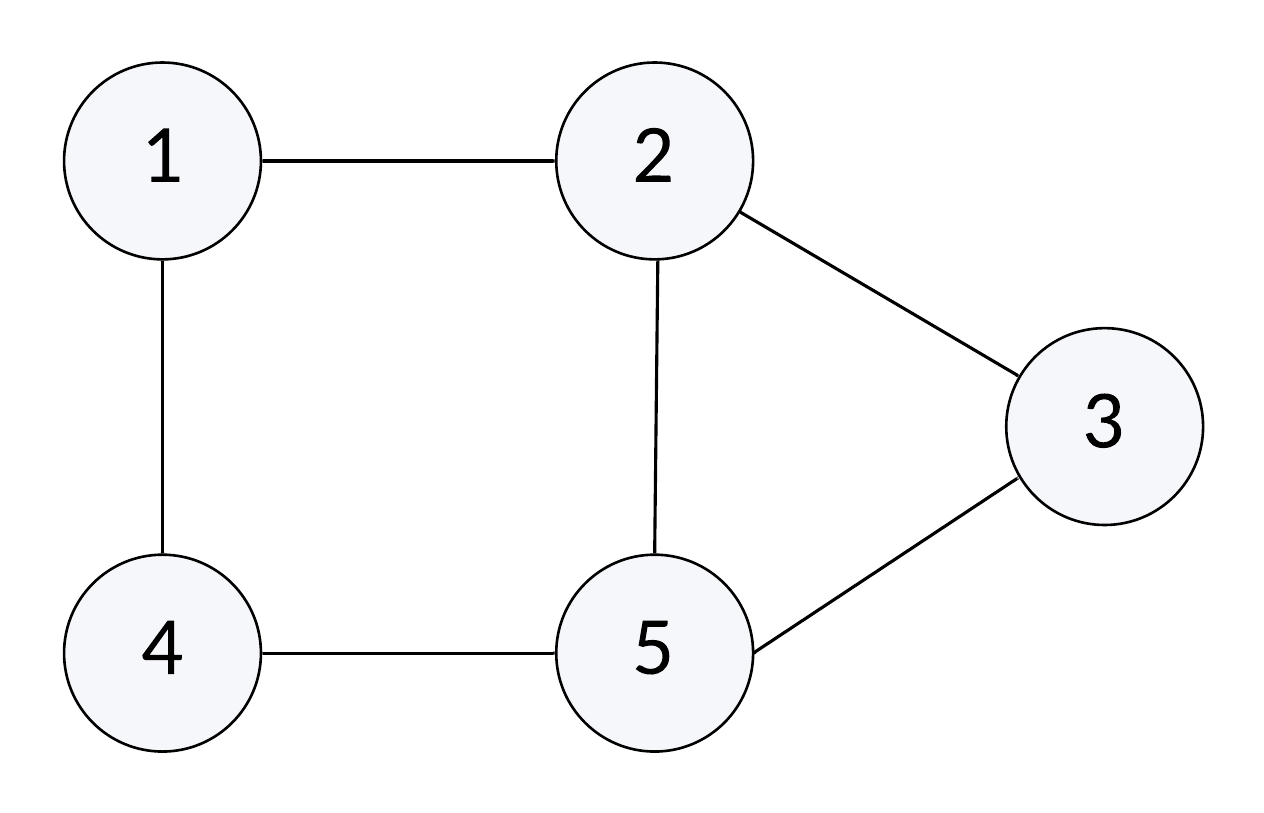}
		\caption{The communication graph of the network in Section \ref{S6}.}
		\label{fig:graph}
	\end{figure}

\begin{figure}[htpb]
	\centering
	\includegraphics[width=1\linewidth]{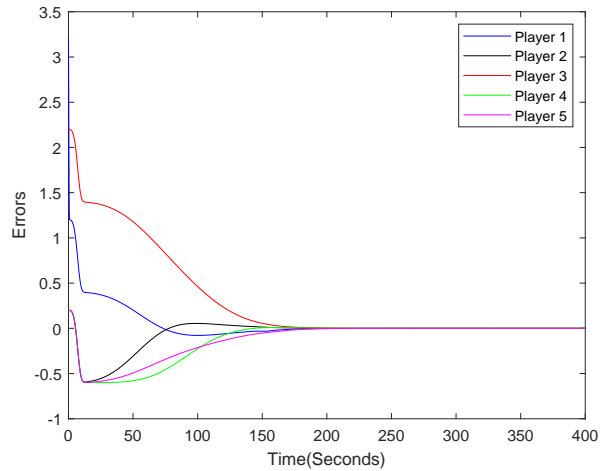}
	\caption{The error with the least-norm variational equilibrium.}
	\label{fig:fig1}
\end{figure}

	The initial states of the players are $x(0)=[3,0,2,0,0]^T$ and $y_{ij}=0, i\neq j$.
	Let (\ref{dynamics1}) be the updating law with $\delta(t)=0.1(1+t)^{-0.5}$, $\varepsilon(t)=20(1+t)^{1.2}$, $\gamma(t)=\delta(t)/(126+ \delta(t)^2+125\varepsilon(t)^2)$, $\varsigma(t)=(1+t)^5$, $w(t)=500+500(1+t)^{9}$. The simulation result is shown in Fig.  \ref{fig:fig1}. It can be seen that the algorithm converges to the least norm Nash equilibrium, which verifies Theorem  \ref{t1}.
	
	To further verify the role of the regularization term in terms of the least-norm property, we consider the case where the shared constraint $g(x)=0$.	The least-norm variational equilibrium in this case is $[0,0,0,0,0]^T$. Let (\ref{dynamics1}) be the updating law with (a) $\delta(t)=0$, $\varepsilon(t)=0$, $\gamma(t)=0.1$, $\varsigma(t)=1$,  $w(t)=500$ for the unregularized case; (b) $\delta(t)=0.1(1+t)^{-0.5}$, $\varepsilon(t)=20(1+t)^{1.2}$, $\gamma(t)=\delta(t)/(126+ \delta(t)^2)$, $\varsigma(t)=(1+t)^5$, $w(t)=500+500(1+t)^{9}$ for the regularized case. The simulation results for both cases are shown in Fig. 	\ref{Fig.main}. It can be seen that the steady states are different. The regularized algorithm converges to the least-norm solution while the unregularized algorithm doesn't.

\begin{figure}[htpb]
	\centering 
	\subfigure[The unregularized case]{
		\label{Fig.sub.1}
		\includegraphics[width=0.5\textwidth]{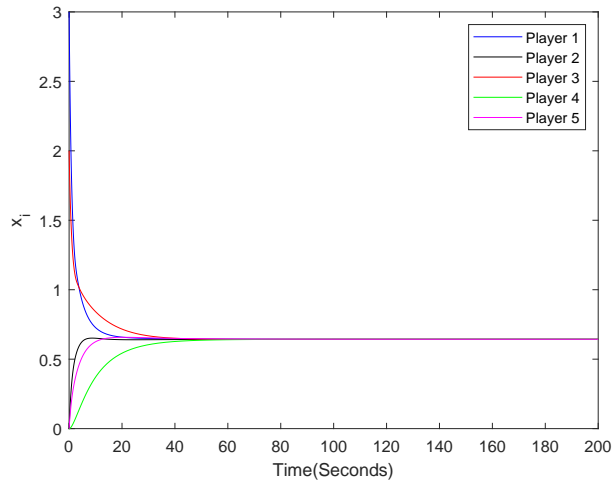}}
	\subfigure[The regularized case]{
		\label{Fig.sub.2}
		\includegraphics[width=0.5\textwidth]{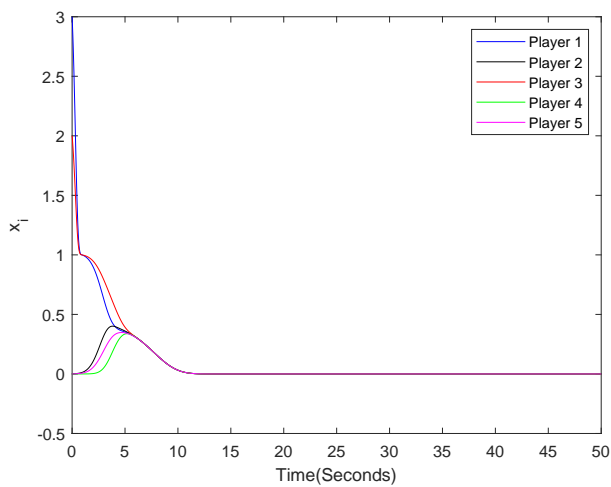}}
      \caption{The role of the regularization term.}
	\label{Fig.main}
\end{figure}
	\color{black}

		\subsection{Applications to Connectivity Control Problems in Robotics} \label{S7}
		
In this section, we consider a connectivity control issue studied in \cite{stankovic2011distributed}. The robot team is consisting of 8 robots with the communication graph described in Fig. 	\ref{fig:graph7}. 

Each robot aims to
achieve a compromise between a local goal $\mathcal{J}_i(x_i)$ and a collective goal  $\mathcal{C}_i(x)$. The objective function for robot $i$  can be written as $f_i(x)=a_i\mathcal{J}_i(x_i)+(1-a_i)\mathcal{C}_i(x)$ with $0\leq a_i\leq 1$ being the coefficient that describes how ``selfish'' robot $i$ is. If $a_i=1$, then robot $i$ is totally selfish and does not care about the collective goal. If $a_i=0$, then robot $i$ is collaborative and does not care about its private objective.

\begin{figure}
	\centering
	\includegraphics[width=0.6\linewidth]{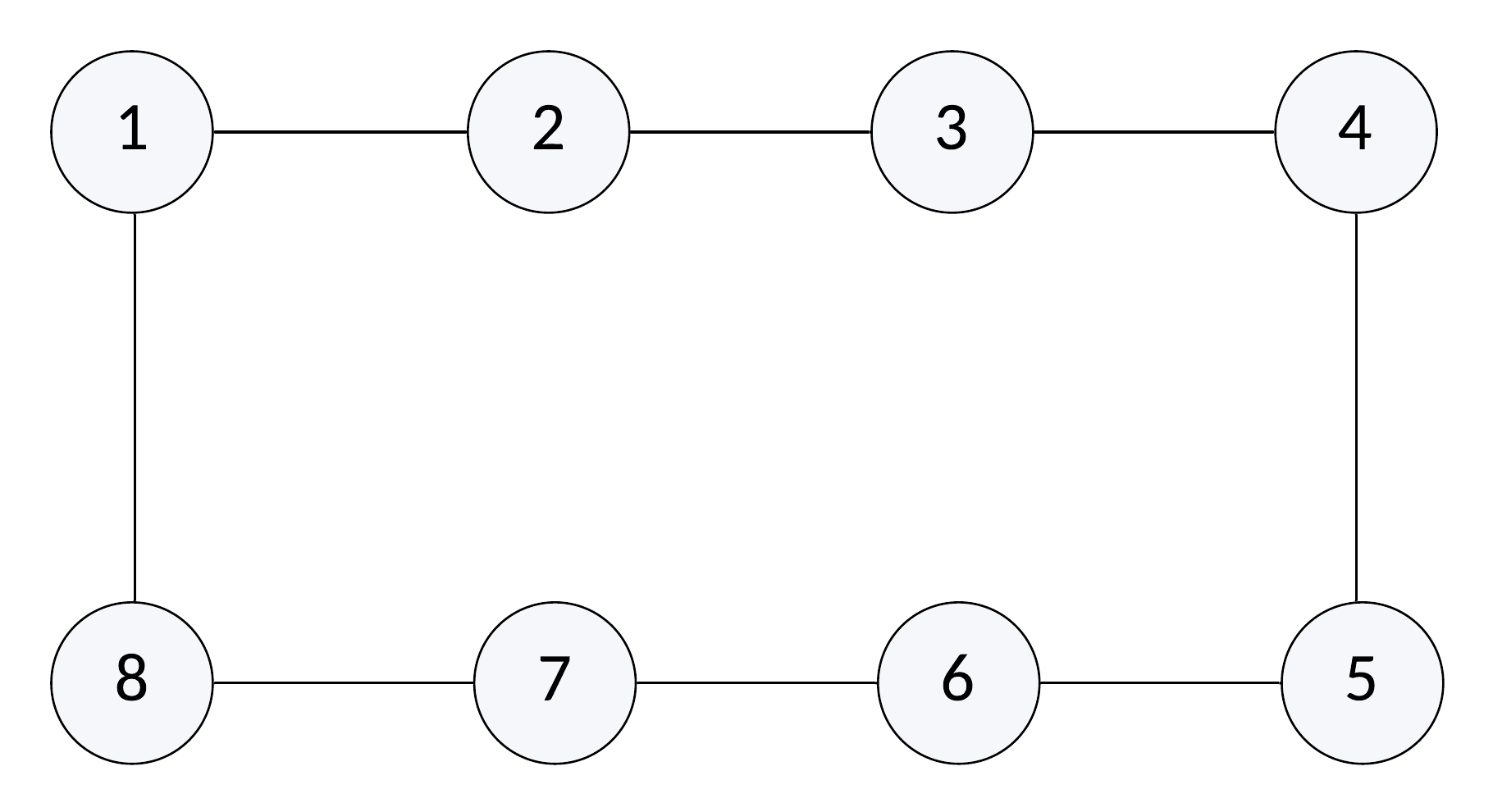}
	\caption{The communication graph of the network in Section \ref{S7}.}
	\label{fig:graph7}
\end{figure}

In the simulation, we set $\mathcal{J}_i(x_i)=(x_i-i)^2$, which indicates a desired distance towards a point, and $\mathcal{C}_i(x)=\sum_{j\in\mathcal{V}_i}||x_i-x_j||^2$, which represents the objective to keep good connections with the neighboring agents. It can be verified that $F(x)=[\nabla_{x_i}f_i(x)]_{i\in\{1,\cdots, 8\}}$ is monotone. Furthermore, if $a_i\neq 0$ for all $i\in\{1,\cdots, 8\}$, $F(x)$ is strongly monotone. Elsewise, it is merely monotone. Furthermore, if $a_i=0$ for all $i\in\{1,\cdots, 5\}$, the problem becomes a  consensus problem.

		Suppose that the  robots' optimal positions satisfy $0\leq x_i^*\leq 10$ and $|x_i^*-x_j^*|\leq 1$, $i\in{1,\cdots,8}$, $j\in\mathcal{V}_i$, where the latter constraints indicate  some  connectivity-related constraints at optimal solutions.
			
		\begin{remark}
		Note that by the proposed method, we cannot guarantee that the distance between two robots is always no larger than the connectivity constraint distance during the motion, but can only guarantee that the optimal solution satisfies this requirement. In control and robotics, potential function based methods are often used to achieve some tasks with state constraints during motion (e.g., collision avoidance and connectivity preservation). In addition, in optimization and game theory, barrier function based methods can also guarantee that the constraints are always satisfied with the iterations. In future, we may consider to apply these methods to guarantee that the constraints can be satisfied all the time. Specifically, barrier function based methods are closely related with  penalty function based methods adopted in this work. They have a similar spirit, where a penalty function focuses on the constraint violation outside of the constraint set while a barrier function focuses on the  constraint guarantee inside of the constraint set. Thus, an open question is that if we replace the penalty function used in this work with a barrier function, what will happen to the algorithm? Usually a barrier function based method requires the constraints to be satisfied initially, which may be a disadvantage compared with a penalty function based method. In practice, the two kinds of methods can be applied according to different application scenarios.
		\end{remark}
%
		
	Let $a_1=a_2=a_3=0.2$, $a_4=a_5=0,$  and $a_6=a_7=a_8=0.6$. It can be calculated that the least-norm variational equilibrium of the game is $[5.4018,4.7259,4.7316,5.1702,$ $5.6088,6.0473,6.5569,6.4018]^T$. 
	
		The initial states of the players are $x(0)=[0,4,10,6,-1,0,12,0]^T$ and $y_{ij}=0, i\neq j$.
	Let (\ref{dynamics1}) be the updating law with $\delta(t)=0.01(1+t)^{-0.5}$, $\varepsilon(t)=2(1+t)^{1.2}$, $\gamma(t)=\delta(t)/(201+ \delta(t)^2+200\varepsilon(t)^2)$, $\varsigma(t)=2(1+t)^5$, $w(t)=500+500(1+t)^{10}$. The simulation results are shown in Figs. \ref{fig:fig4} and \ref{fig:fig5}. It can be seen that the algorithm converges to the least-norm variational equilibrium.
	
\begin{figure}
	\centering
	\includegraphics[width=1\linewidth]{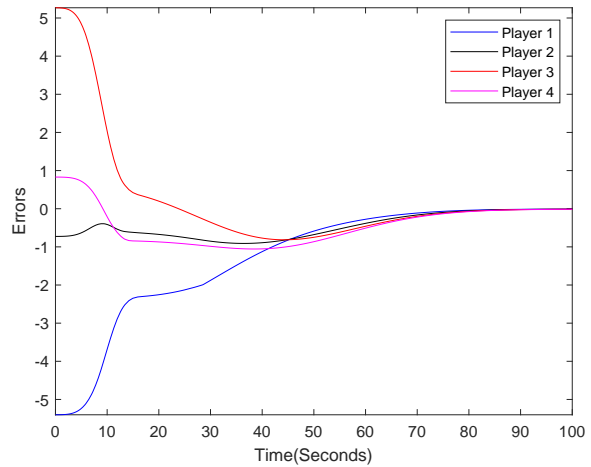}
	\caption{The errors of Players 1-4.}
	\label{fig:fig4}
\end{figure}

\begin{figure}
	\centering
	\includegraphics[width=1\linewidth]{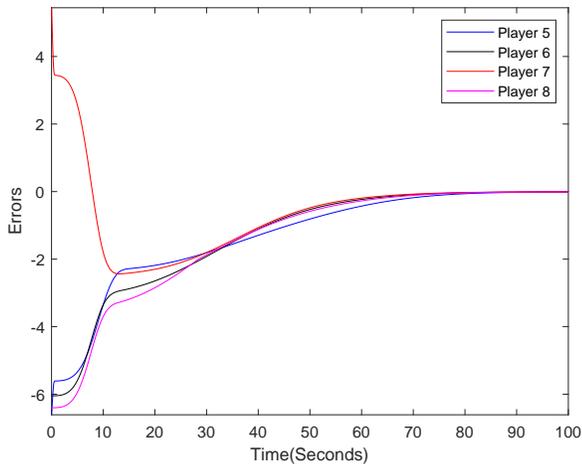}
	\caption{The errors of Players 5-8.}
	\label{fig:fig5}
\end{figure}

	\color{black}
	\section{Conclusions}
	
	In this work, we propose a novel distributed regularized penalty method to solve the Nash equilibrium seeking problem of a monotone generalized noncooperative game, by using a time-varying penalty parameter and a time-varying regularization term. A framework of the regularized penalty method is proposed. It is proven that the solution to the regularized penalized problem asymptotically converges to the least-norm variational equilibrium of the original generalized noncooperative game. Furthermore, the proposed projected gradient dynamics achieve an asymptotic convergence to the solution to the regularized penalized problem. Therefore, an asymptotically convergent algorithm is obtained. Simulation results show the effectiveness and efficiency of the proposed algorithm. 
				
		\color{black}

\bibliographystyle{IEEEtran}
\bibliography{bib}

	~\\
	~\\

		\parpic{\includegraphics[width=1in,clip,keepaspectratio]{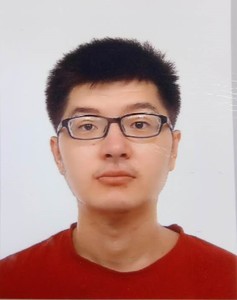}}
		\noindent {\bf Chao Sun}  received his B.Eng degree from University of Science and Technology of China in 2013. He obtained the Ph.D. degree from School of Electrical and Electronics Engineering, Nanyang Technological University, Singapore in 2018. He was a Wallenberg-NTU Presidential Postdoctoral Fellow at Nanyang Technological University from June 2019 to August 2020. He is now working as a research fellow at Nanyang Technological University. His research interests include distributed optimization and game theory.

			~\\
			~\\

		\parpic{\includegraphics[width=1in,clip,keepaspectratio]{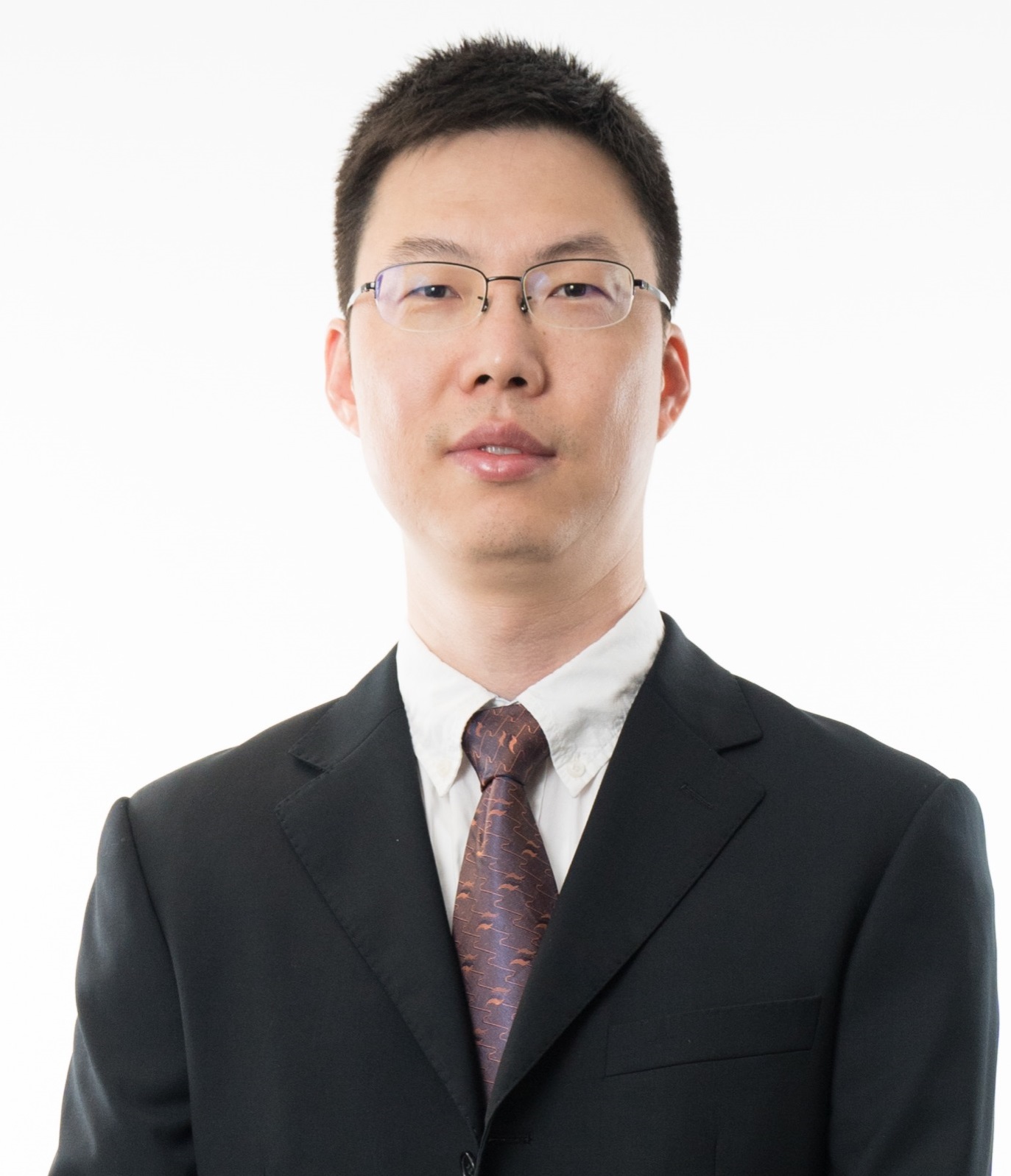}}
		\noindent {\bf Guoqiang Hu} joined the School of Electrical and Electronic Engineering at Nanyang Technological University, Singapore in 2011, and is currently a tenured Associate Professor and the Director of the Centre for System Intelligence and Efficiency. He received Ph.D. in Mechanical Engineering from the University of Florida in 2007. He works on distributed control, distributed optimization and game theory, with applications to multi-robot systems and smart city systems. He was a recipient of the Best Paper in Automation Award in the 14th IEEE International Conference on Information and Automation, and a recipient of the Best Paper Award (Guan Zhao-Zhi Award) in the 36th Chinese Control Conference. He serves as Associate Editor for IEEE Transactions on Control Systems Technology and IEEE Transactions on Automatic Control.

\end{document}